\def\tensor{\mathcal}
\let\oldlabel=\label
\def\prellabel{\marginparsep=1em\marginparwidth=44pt
    \def\label##1{\oldlabel{##1}\ifmmode\else\ifinner\else
         \marginpar{{\footnotesize\ \\ \tt
                    ##1}}\fi\fi}}
\theoremstyle{plain}
\newtheorem{thm}{Theorem}[section]
\newtheorem{prop}[thm]{Proposition}
\newtheorem{cor}[thm]{Corollary}
\newtheorem{lemma}[thm]{Lemma}
\theoremstyle{definition}
\newtheorem{defn}[thm]{Definition}
\newtheorem{conj}[thm]{Conjecture}
\newtheorem{rmk}[thm]{Remark}
\newtheorem{ex}[thm]{Example}
\newcommand{\NN}{{\mathbb N}}
\newcommand{\PP}{{\mathbb P}}
\DeclareMathOperator{\ini}{in}
\DeclareMathOperator{\reg}{reg}
\DeclareMathOperator{\Seg}{Seg}
\begin{document}

\title{A characteristic free approach to secant varieties of  Segre embedding of $\PP^1\times \PP^{a-1} \times \PP^{b-1}$}

\author{Aldo Conca}
\email{conca@dima.unige.it}
\address{Dipartimento di Matematica, Universit\`a di Genova, Italy} 
\author{Emanuela De Negri}
\email{denegri@dima.unige.it}
\address{Dipartimento di Matematica, Universit\`a di Genova, Italy} 
\author{\v{Z}eljka Stojanac}
\email{zeljka.stojanac@gmail.com}
\address{Institute for Theoretical Physics, University of Cologne, Germany}

\begin{abstract} The goal of this short note is to study the secant varieties of the Segre embedding   of  the product $\PP^1\times \PP^{a-1} \times \PP^{b-1}$ by means of the standard tools of combinatorial commutative algebra.  We reprove and extend to arbitrary characteristic the results of Landsberg and Weyman \cite{LW} regarding the defining ideal and the Cohen-Macaulay property of the secant varieties. Furthermore we compute  their  degrees and give a bound for  their Castelnuovo-Mumford regularities, which are sharp in many cases. 
\end{abstract}

\maketitle
\section{Introduction}

The topic of this paper is the secant varieties of  the triple  Segre product  of  $\PP^1\times \PP^{a-1} \times \PP^{b-1}$. 
For basic facts  about tensor decomposition and Cohen-Macaulayness of secant varieties of  Segre products  we refer the reader to Oeding's  paper \cite{Oe} or to the book of Landsberg \cite{L}. 
We will work  with $3$-tensors of size $(2,a,b)$ with  $2\leq a\leq b$. The variety of rank-$1$ tensors is denoted by 
$\Seg(2,a,b)$  and corresponds to   the image of  the Segre embedding  of $\PP^1\times \PP^{a-1}\times \PP^{b-1}$  in $\PP^{2ab-1}$. We will further denote by  $\sigma_t(2,a,b)$ the 
 $t$-secant variety of $\Seg(2,a,b)$,  that is,  the variety of rank-$t$ tensors. The  defining ideal of   $\Seg(2,a,b)$ in $R=K[x_{ijk} : (i,j,k)\in [2]\times [a]\times [b] ]$    will be denoted by $I(a,b)$.  The ideal defining the secant variety $\sigma_t(2,a,b)$ will be denoted by  $I(a,b)^{\{t\}}$. 

 We recall that  $\Seg(2,a,b)$   is a well understood toric variety whose defining  ideal is the  Hibi ideal  of  $[2]\times[a]\times [b]$. 
In other words, equipping $[2]\times[a]\times [b]$ with the natural   structure of distributive lattice  one has: 
$$I(a,b)=( x_\alpha x_\beta-x_{\alpha\wedge \beta}x_{\alpha\vee \beta} :  \alpha,\beta  \mbox{ are incomparable elements of } [2]\times[a]\times [b] ).$$

Our goal it to prove that the ideal  $I(a,b)^{\{t\}}$  is generated by  determinantal equations coming from ``unfoldings"  of  the associated tensors and  that it defines a Cohen-Macaulay ring.  These results are already known in characteristic $0$, see \cite{LW}, while our results are valid in arbitrary characteristic.   Furthermore our construction allows us to give a formula for the degree of the secant varieties $\sigma_t(2,a,b)$, and to give a bound for their Castelnuovo-Mumford regularity; this bound   is sharp if  $2t\leq b$. 
We also conjecture that $I(2t,2t)^{\{t\}}$ defines a Gorenstein ring for all $t\geq 1$ and check it for 
$t\leq 3$.

The {\it unfolding} (a.k.a. flattening) is a transformation that reorganizes a tensor into a matrix.  For a tensor  of size  $n_1 \times n_2 \cdots \times n_d$, the $k$-th unfolding is a matrix  of size $n_k \times (n_1 \cdots n_{k-1} n_{k+1} \cdots n_d)$. The rows are indexed by the $k$-th index, and the columns are indexed by the vectors of the remaining indices. The order on the row and column indices is not important. In the following we will order them  lexicographically.

\begin{ex}\label{Ex:unfolding}
Let $\tensor{X}$ be an order-$3$ tensor of size $2 \times 3 \times 3$. Then, the first, second, and third unfolding are of the form 
$$\begin{matrix}
\tensor{X}^{\{1\}}= 
\begin{bmatrix}
 x_{111} & x_{112} &  x_{113} & x_{121} & x_{122} & x_{123}  & x_{131} & x_{132} & x_{133}  \\
 x_{211} & x_{212} &  x_{213} & x_{221} & x_{222} & x_{223} & x_{231} & x_{232} & x_{233}  
\end{bmatrix} \\  \\
 \tensor{X}^{\{2\}}= 
\begin{bmatrix}
 x_{111} & x_{112} &  x_{113} & x_{211} & x_{212} &  x_{213}  \\
 x_{121} & x_{122} &  x_{123} & x_{221} & x_{222} &  x_{223}  \\
 x_{131} & x_{132} &  x_{133} & x_{231} & x_{232} &  x_{233}  \\

\end{bmatrix} \\  \\ 
\tensor{X}^{\{3\}}= 
\begin{bmatrix}
 x_{111} & x_{121}  & x_{131} & x_{211} & x_{221}   & x_{231}  \\
 x_{112} & x_{122}  & x_{132} & x_{212} & x_{222}   & x_{232}  \\
 x_{113} & x_{123}  & x_{133} & x_{213} & x_{223}   & x_{233}  \\
\end{bmatrix}.
\end{matrix}
$$

Set $X=[x_{ij}]$, $Y=[y_{ij}]$  with $x_{ij}= x_{1ij}$, $y_{ij}= x_{2ij}$ for all $i \in [a]$, $j \in [b]$. 
We  remark that for  $\tensor{X}$  of size $2 \times a \times b$   the second and the third unfolding of $\tensor{X}$ are  the  block matrices 
$$
\tensor{X}^{\{2\}} = \begin{bmatrix} X & Y \end{bmatrix} 
\quad \text{and} \quad
\tensor{X}^{\{3\}} = \begin{bmatrix} X^T & Y^T \end{bmatrix}, 
$$
They can be combined in the single arrangement: 
$$\begin{array}{ll}
X & Y \\
Y
\end{array}.
$$
  \end{ex}

The  $(t+1)$-minors of the various  unfolding matrices are contained in the ideal defining the $t$-secant variety of the Segre variety  but, in general, one needs extra generators.  On the other hand we will see that  for the tensor of size $(2,a,b)$ the $(t+1)$-minors coming from the unfolding matrices are enough to generate $I(a,b)^{\{t\}}$. 

In the following for a matrix $M$ with entries in a ring $R$ and $t\in \NN$ we will denote by $I_t(M)$ the ideal of $R$ generated by all the $t$-minors of $M$.

\section{Gr\"obner bases and  secant ideals}

In this section we quickly recall  the results of Sturmfels and Sullivant \cite{SS} and of Simis and Ulrich \cite{SU} that we will use.

\begin{defn}
Let $J_1,J_2,\ldots,J_r$ be ideals in a polynomial ring $R=K[x_1,x_2,\ldots,x_n]$ over a field $K$. Their {\it join} $J_1 * J_2 * \cdots * J_r$ is the elimination ideal in $R$
$$
(J_1({\bf{y}}_1)+\ldots+J_r({\bf y}_r) + (y_{1i}+y_{2i}+\ldots + y_{ri}-x_i:1\leq i \leq n))\cap R,
$$
where ${\bf y}_i=(y_{1i},y_{2i},\ldots,y_{ni})$ are new indeterminates  for all $i \in [r]$ and $J_i({\bf y}_i)$ is the image of  $J_i$ in $K[{\bf x},{\bf y}] = K[x_1,x_2,\ldots,x_n, y_{11}, y_{12}, \ldots, y_{rn}]$ under the map ${\bf x} \mapsto {\bf y}_i$.

We denote by  $I^{\{r\}} $ the $r$-fold join of $I$ with itself,  that is, 
$$
I^{\{r\}} = I * I * \cdots * I.
$$
\end{defn}

If $K$ is algebraically closed and  $I$ defines an  irreducible   variety $\mathbb{X} \subset \mathbb{P}^N$, then $I^{\{r\}}$ is the defining ideal of the 
 $r$th secant variety $\mathbb{X}^{\{r\}} $ of $\mathbb{X}$, defined as:
$$
\mathbb{X}^{\{r\}} = \overline{\bigcup_{P_1,\ldots,P_r \in \mathbb{X}} \langle P_1,\ldots, P_r \rangle}
$$
where $\langle P_1,\ldots, P_r \rangle$ denotes the linear span of the points $P_1,\ldots,P_r$ and the bar denotes the Zariski closure.

Let $\Delta$ be a simplicial complex and denote by $I_{\Delta}$ the associated  Stanley-Reisner ideal. 
The ideal $I_\Delta^{\{r\}}$ turns out to be  square-free, that is, 
$$ I_{\Delta}^{\{r\}} = I_{\Delta^{\{r\}}}$$  where  the complex $\Delta^{\{r\}}$ is described as follows:  

\begin{prop}\cite[Remark 2.9 ]{SS}\label{prop:Delta-r} 
Every face of $\Delta^{\{r\}}$ is the union of $r$ faces of $\Delta$. Thus 
if  $ F_1,F_2,\ldots,F_v$  are the  facets of $\Delta$, then the facets of $\Delta^{\{r\}}$  are the maximal subsets of the form $F_{i_1} \cup F_{i_2} \cup \ldots \cup F_{i_r}$ with $1\le i_1<\cdots< i_r\le v$.
\end{prop}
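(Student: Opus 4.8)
The plan is to realize $I_\Delta^{\{r\}}$ as the kernel of an explicit $K$-algebra map and then to read off which squarefree monomials lie outside it. Using the $n$ linear forms $y_{1i}+\cdots+y_{ri}-x_i$ to eliminate the $x_i$, the quotient of $K[\mathbf{x},\mathbf{y}]$ by $\sum_{k=1}^r I_\Delta(\mathbf{y}_k)+(y_{1i}+\cdots+y_{ri}-x_i:i\in[n])$ is identified with $K[\mathbf{y}]/\sum_{k=1}^r I_\Delta(\mathbf{y}_k)$. Since the $r$ blocks of variables $\mathbf{y}_k$ are disjoint, this is the $r$-fold tensor product $K[\Delta]^{\otimes r}$ of Stanley--Reisner rings, and the induced map $\phi\colon R\to K[\Delta]^{\otimes r}$ sends $x_i\mapsto y_{1i}+\cdots+y_{ri}$. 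By the definition of the join, $I_\Delta^{\{r\}}=\Ker\phi$.

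Next I would use that $I_\Delta^{\{r\}}=I_{\Delta^{\{r\}}}$ is squarefree: a set $G\subseteq[n]$ is a face of $\Delta^{\{r\}}$ precisely when $x_G=\prod_{i\in G}x_i\notin\Ker\phi$, i.e.\ $\phi(x_G)\neq 0$. Expanding gives
\[
\phi(x_G)=\prod_{i\in G}\Bigl(\sum_{k=1}^r y_{ki}\Bigr)=\sum_{f\colon G\to[r]}\ \prod_{i\in G}y_{f(i),i}.
\]
The crucial point is that $K[\Delta]^{\otimes r}$ has a $K$-basis of monomials whose support in each block $\mathbf{y}_k$ is a face of $\Delta$; the monomial $\prod_{i\in G}y_{f(i),i}$ is squarefree, its support in block $k$ is the fibre $f^{-1}(k)$, and distinct $f$ give distinct monomials. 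Hence no cancellation occurs: $\phi(x_G)$ is a sum of distinct basis elements, the $f$-term surviving iff every fibre $f^{-1}(k)$ is a face. Thus $\phi(x_G)\neq 0$ iff some $f\colon G\to[r]$ has all fibres faces of $\Delta$, i.e.\ iff $G$ partitions into $r$ faces of $\Delta$.

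It remains to translate this combinatorially. A partition into $r$ faces is in particular a union of $r$ faces; conversely, if $G=F_1\cup\cdots\cup F_r$ with each $F_k\in\Delta$, then assigning every $i\in G$ to the least $k$ with $i\in F_k$ partitions $G$ into subsets of faces, which are themselves faces. This proves the first assertion. For the facets, every face of $\Delta^{\{r\}}$ is a union of $r$ faces, each contained in a facet of $\Delta$, so it is contained in some $F_{i_1}\cup\cdots\cup F_{i_r}$; and conversely each such union is a union of $r$ faces, hence a face of $\Delta^{\{r\}}$. Therefore the facets of $\Delta^{\{r\}}$ are exactly the maximal members of $\{F_{i_1}\cup\cdots\cup F_{i_r}:1\le i_1<\cdots<i_r\le v\}$ (taking $r\le v$, as in the statement).

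I expect the only delicate step to be the no-cancellation argument: one must ensure that the expanded terms of $\phi(x_G)$ stay linearly independent after passing to $K[\Delta]^{\otimes r}$, so that $\phi(x_G)=0$ forces each term to vanish separately. The tensor-product description of the fibre ring is exactly what makes this transparent, and since it concerns only a monomial $K$-basis it is valid over any field, which is the source of the characteristic-free conclusion.
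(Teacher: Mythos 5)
Your proof is correct, but there is nothing in the paper to compare it against: the proposition is quoted verbatim from \cite[Remark 2.9]{SS} and the paper supplies no proof of it, so what you have written is a self-contained replacement rather than a variant. Your route --- identify $I_\Delta^{\{r\}}$ with $\Ker\bigl(\phi\colon R\to K[\Delta]^{\otimes r}\bigr)$, $x_i\mapsto y_{1i}+\cdots+y_{ri}$, expand $\phi(x_G)$ over functions $f\colon G\to[r]$, and note that the surviving terms are \emph{distinct} elements of the monomial $K$-basis of $K[\Delta]^{\otimes r}$ with coefficient $1$, so no cancellation is possible --- is sound, and it is essentially the algebraic counterpart of the geometric argument behind the original statement: the variety of $I_\Delta$ is a union of coordinate subspaces $L_F$, and the join of coordinate subspaces is again one, $L_{F_1}+\cdots+L_{F_r}=L_{F_1\cup\cdots\cup F_r}$. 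Your version buys something real: it works verbatim over any field (the only input is a monomial basis), with no appeal to algebraic closure or to the ideal--variety dictionary, which matches the characteristic-free spirit of this paper. Two small points. You invoke the squarefreeness $I_\Delta^{\{r\}}=I_{\Delta^{\{r\}}}$ from the surrounding text, which is legitimate since the paper asserts it just before the proposition; but it also falls out of your own setup for free: $\phi$ is homogeneous for the $\ZZ^n$-multigrading with $\deg y_{ki}=\deg x_i=e_i$, so $\Ker\phi$ is a monomial ideal, and $K[\Delta]^{\otimes r}$ is reduced (a polynomial ring modulo a squarefree monomial ideal), so $\Ker\phi$ is radical; one sentence to this effect would make the argument fully self-contained. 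Also, in the combinatorial translation you should say explicitly that empty fibres are allowed (the empty set is a face), so that ``partition into $r$ faces'' covers unions of fewer than $r$ nonempty faces; your least-index assignment and the padding remark for $r\le v$ then settle the distinct-indices form $F_{i_1}\cup\cdots\cup F_{i_r}$ in the facet description.
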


Moreover the $r$-fold join of  an initial ideal contains the initial ideal of  $r$-fold join ideal. 

\begin{thm}\cite[Corollary 4.2]{SS}\label{thm:switchingr}   Let $I$ be an ideal and $\prec$ a term oder.    Then  one has: 

$$ \ini(I^{\{r\}}) \subseteq \ini(I)^{\{r\}}$$
for all $r$. \end{thm}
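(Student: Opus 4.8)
The plan is to reduce the statement to the behaviour of initial ideals under a single, well-chosen $K$-algebra embedding. Work in $T=K[y_{ji}:j\in[r],\,i\in[n]]$ and consider the injective $K$-algebra homomorphism $\phi\colon R\to T$ given by $\phi(x_i)=y_{1i}+\cdots+y_{ri}$. Using the linear relations $x_i=y_{1i}+\cdots+y_{ri}$ to eliminate the $x$'s, the elimination ideal defining the join can be rewritten as a contraction along $\phi$: setting $A=I(\mathbf y_1)+\cdots+I(\mathbf y_r)\subseteq T$, one checks directly from the Definition that $I^{\{r\}}=\phi^{-1}(A)$, and likewise $\ini(I)^{\{r\}}=\phi^{-1}\big(\ini(I)(\mathbf y_1)+\cdots+\ini(I)(\mathbf y_r)\big)$. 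Thus it suffices to compare the initial ideal of a contraction with the contraction of an initial ideal.

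The key device is a weight homogenization that makes $\phi$ respect the gradings. Fix a weight vector $w\in\NN^n$ and give the variable $y_{ji}$ the weight $w_i$, independently of the copy $j$; call the resulting weight on $T$ by $\widetilde w$. The crucial observation is that, because all $r$ copies of a given variable receive the same weight, each linear form $y_{1i}+\cdots+y_{ri}$ is $\widetilde w$-homogeneous, so $\phi$ carries $w$-homogeneous elements of $R$ to $\widetilde w$-homogeneous elements of $T$ of the same degree. I would then prove the following general lemma: if $\phi$ is an injective, weight-homogeneous map as above and $A\subseteq T$ is any ideal, then
$$\ini_w(\phi^{-1}(A))\subseteq \phi^{-1}(\ini_{\widetilde w}(A)).$$
The proof is immediate: for $f\in\phi^{-1}(A)$ the leading form satisfies $\phi(\ini_w(f))=\ini_{\widetilde w}(\phi(f))$, where injectivity of $\phi$ guarantees that the leading form is not cancelled, and the right-hand side lies in $\ini_{\widetilde w}(A)$.

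It remains to pass from weight orders back to the term order $\prec$ and to evaluate $\ini_{\widetilde w}(A)$. I would choose $w$ representing $\prec$ simultaneously for the two ideals $I$ and $I^{\{r\}}$, that is, with $\ini_w(I)=\ini_\prec(I)$ and $\ini_w(I^{\{r\}})=\ini_\prec(I^{\{r\}})$; such a common weight exists because $\prec$ lies in the relative interior of a Gr\"obner cone of each ideal and one may pick $w$ in the intersection of these finitely many open cones. Since $A$ is the sum of the ideals $I(\mathbf y_j)$, which live in pairwise disjoint blocks of variables, a Buchberger coprimality argument (leading terms from different blocks are coprime, so all cross $S$-polynomials reduce to zero) gives $\ini_{\widetilde w}(A)=\sum_j\ini_{\widetilde w}(I(\mathbf y_j))=\sum_j\ini_\prec(I)(\mathbf y_j)$. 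Combining this with the lemma and the contraction descriptions above yields
$$\ini_\prec(I^{\{r\}})=\ini_w(I^{\{r\}})\subseteq\phi^{-1}\Big(\sum_j\ini_\prec(I)(\mathbf y_j)\Big)=\ini_\prec(I)^{\{r\}},$$
which is the desired inclusion.

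I expect the main obstacle to be the first, structural point: initial ideals do not commute with the linear substitution hidden in the join, so a naive ``intersect the generators and take leading terms'' fails. The resolution is precisely the equal-weight homogenization, which turns the defining linear forms into $\widetilde w$-homogeneous elements and lets leading forms pass through the contraction $\phi^{-1}$. A secondary technical point, easily handled but worth stating carefully, is the simultaneous representability of $\prec$ by a single weight $w$ for both $I$ and $I^{\{r\}}$, which is what allows the clean comparison between $\ini_\prec$ and $\ini_w$ on both sides of the inclusion.
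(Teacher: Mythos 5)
Your proof is correct and follows essentially the same route as the source the paper cites for this statement (the paper gives no proof of its own, quoting \cite[Corollary 4.2]{SS}, whose argument rests on exactly your two pillars: a weight vector representing $\prec$ simultaneously for the finitely many relevant ideals, extended with \emph{equal} weights to all $r$ copies of the variables so that the defining linear forms of the join become homogeneous and the join is realized as the contraction $\phi^{-1}(A)$ along the degree-preserving injection $\phi$, together with the disjoint-block identity $\ini_{\widetilde w}(A)=\sum_j\ini_\prec(I)(\mathbf y_j)$). The only points to polish are routine: the simultaneous weight exists because a single $w$ can realize $\prec$ on the finite set of monomial comparisons coming from the reduced Gr\"obner bases of both $I$ and $I^{\{r\}}$, and since $\widetilde w$ is merely a weight (not a term order) the Buchberger coprimality step should formally be run after refining $\widetilde w$ by an arbitrary tie-breaking term order --- harmless here, as each $\ini_{\widetilde w}(I(\mathbf y_j))=\ini_\prec(I)(\mathbf y_j)$ is already a monomial ideal, so the refinement changes nothing.
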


If the equality holds for every $r$ then the term order is said to be  {\it delightful} for the ideal $I$.

\section{The ideal of two minors and the associated simplicial complex }

Let $K$ be a field and let $\tensor{X}=(x_{ijk})$ be the  tensor of indeterminates of size $2 \times a \times b$, with $2\le a\le b $. 

Let $R=K[x_{111}, x_{112},\ldots, x_{2ab}]$  be the  polynomial ring with indeterminates the entries of $\tensor{X}$. From now on we denote by $\tau$ any diagonal term order, that is,  a term order such that the initial term of every minor of $\tensor{X}^{\{2\}}$ and of $\tensor{X}^{\{3\}} $ is its diagonal term. For example the lexicographic order induced by $$x_{111}>x_{112}>x_{113}>x_{121}>\cdots>x_{1ab}>x_{211}>\cdots>x_{2ab}$$ is diagonal.

One can easily check that in this case the defining ideal of the Segre embedding   of $\PP^1\times \PP^{a-1}\times \PP^{b-1}$  in $\PP^{2ab-1}$, that is,   the Hibi ideal associated to the poset $[2]\times [a]\times [b]$,  is indeed the ideal generated by the $2$-minors of  the second  and third   unfolding,  
that is,   
$$I(a,b)=I_2(\tensor{X}^{\{2\}})+I_2(\tensor{X}^{\{3\}}).$$

For example, the Hibi relation $x_{213}x_{124}-x_{113}x_{224}$ is not a minor in $\tensor{X}^{\{2\}}$ or $\tensor{X}^{\{3\}}$  but can be written as a sum of a $2$-minor of  $\tensor{X}^{\{2\}}$  and a $2$-minor of  $\tensor{X}^{\{3\}}$:
$$x_{213}x_{124}-x_{113}x_{224}=(x_{213}x_{124}-x_{114}x_{223})+(x_{114}x_{223}- x_{113}x_{224}).$$ 
 
 We start by identifying a Gr\"obner basis for the ideal $I(a,b)$ itself.  
 
\begin{prop}\label{Prop:GroebnerI2}
The $2$-minors of the second and the third unfoldings of $\tensor{X}$ are a Gr\"obner basis of $I(a,b)$ with respect to any diagonal term order. 
\end{prop}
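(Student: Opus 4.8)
The plan is to verify Buchberger's criterion, exploiting the fact that each unfolding, taken by itself, is a matrix of distinct indeterminates. Indeed $\tensor{X}^{\{2\}}=[X\ Y]$ and $\tensor{X}^{\{3\}}=[X^{T}\ Y^{T}]$ are both \emph{generic} matrices: their entries are the $2ab$ pairwise distinct variables $x_{ijk}$. Hence, by the classical theorem that the $t$-minors of a generic matrix are a Gr\"obner basis with respect to any diagonal term order, the $2$-minors of $\tensor{X}^{\{2\}}$ form a Gr\"obner basis of $I_2(\tensor{X}^{\{2\}})$ with respect to $\tau$, and likewise for $\tensor{X}^{\{3\}}$; here one uses that $\tau$ is \emph{simultaneously} diagonal for both unfoldings, so the initial term of each $2$-minor is its main-diagonal product. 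Consequently every $S$-polynomial between two minors of the \emph{same} unfolding already reduces to $0$ modulo that family, and the entire problem collapses to the cross $S$-polynomials, namely those built from a minor $f$ of $\tensor{X}^{\{2\}}$ and a minor $g$ of $\tensor{X}^{\{3\}}$.

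First I would dispose of the easy cross pairs: by Buchberger's product criterion, if $\ini_\tau(f)$ and $\ini_\tau(g)$ are coprime then $S(f,g)$ reduces to $0$ automatically, so only pairs whose leading monomials share a variable $x_{ijk}$ remain, together with the degenerate case $\ini_\tau(f)=\ini_\tau(g)$. Writing $\ini_\tau(f)=x_\alpha x_\beta$ and $\ini_\tau(g)=x_\mu x_\nu$ as main-diagonal products, I would enumerate the few ways in which they can overlap in exactly one variable, recording in each case the four tensor indices involved. The key point is that $S(f,g)$ is again a binomial lying in the toric ideal $I(a,b)$, and I expect each such binomial to straighten to $0$ in a short reduction: one rewrites it using a pure $2$-minor of $X$ or of $Y$ (which lies in both $I_2(\tensor{X}^{\{2\}})$ and $I_2(\tensor{X}^{\{3\}})$, since a minor and its transpose coincide) together with one mixed minor, in the manner of the three-term straightening relations of the Hibi ring. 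Keeping track of the term order throughout, so that every rewriting strictly lowers the leading monomial, then yields a standard reduction of $S(f,g)$ to $0$ modulo the union of the two families.

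The main obstacle is precisely this last point: organising the overlapping cross pairs into a clean, exhaustive case list and checking that each $S$-polynomial reduces without producing new leading terms. This is exactly where the combined row/column structure of the two unfoldings must be used in an essential way, because under $\tau$ the ideal $\ini_\tau(I(a,b))$ is strictly larger than the ideal generated by the chain (straightening) monomials alone; for instance the main-diagonal term of a suitable mixed minor is a product $x_\alpha x_\beta$ with $\alpha,\beta$ incomparable, which belongs to $\ini_\tau(I(a,b))$ yet is not a straightening monomial, so the naive distributive-lattice Gr\"obner basis does not suffice. As an alternative that sidesteps the case analysis, I would let $J$ be the monomial ideal generated by the main-diagonal terms of all the minors in the two families; since these terms are squarefree and lie in $\ini_\tau(I(a,b))$, one has $J\subseteq \ini_\tau(I(a,b))$ and hence $\dim_K(R/J)_d\ge \dim_K(R/I(a,b))_d$ for every $d$. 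It would then suffice to prove the reverse inequality by a Hilbert-function count, comparing the standard monomials of $J$ (the faces of its Stanley--Reisner complex) with the multichain basis of the Hibi ring $R/I(a,b)$. Establishing that combinatorial identity, rather than the $S$-polynomial bookkeeping, would be the crux of this second route; either way the essential content is the same.
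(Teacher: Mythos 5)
Your proposal follows essentially the same route as the paper's proof: it invokes the classical diagonal Gr\"obner basis theorem for each unfolding (each being a generic matrix in the $2ab$ variables), uses Buchberger's criterion to reduce everything to the cross $S$-polynomials between the mixed minors peculiar to $\tensor{X}^{\{2\}}$ and $\tensor{X}^{\{3\}}$, and then disposes of the finitely many overlapping leading-term patterns by explicit reduction --- exactly the structure of the paper's argument, which carries out that bookkeeping in a handful of explicit subcases (the part you flag as the main obstacle is indeed routine and goes through). One small detail your computation would correct: in every one of the paper's reductions the $S$-polynomial is rewritten using one mixed minor from each unfolding (one from $M^{\{1\}}$ and one from $M^{\{2\}}$ in the paper's notation), never a pure $2$-minor of $X$ or $Y$, so the predicted ``pure plus mixed'' shape of the straightening step is not what actually occurs, though this affects nothing in the validity of the plan.
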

\begin{proof} It is not restrictive to consider the lexicographic order induced by $x_{111}>x_{112}>x_{113}>x_{121}>\cdots>x_{1ab}>x_{211}>\cdots>x_{2ab}$. We first consider the following subsets of $2$-minors in $I_2$ :
$$\begin{array}{l}
M^{\{1\}}= \{ x_{1 \alpha_2 \alpha_3} x_{2 \beta_2 \beta_3} -  x_{1\beta_2 \alpha_3}  x_{2 \alpha_2 \beta_3}: \alpha_2 < \beta_2\} \\
M^{\{2\}}= \{ x_{1 \alpha_2 \alpha_3}  x_{2 \beta_2 \beta_3} -  x_{1\alpha_2 \beta_3}  x_{2 \beta_2 \alpha_3}: \alpha_3 < \beta_3\} \\
M^{\{3\}}= \{ x_{1 \alpha_2 \alpha_3}  x_{1 \beta_2 \beta_3} -  x_{1 \beta_2 \alpha_3}  x_{1 \alpha_2 \beta_3}: \alpha_2 < \beta_2, \alpha_3 < \beta_3\} \\
M^{\{4\}}= \{ x_{2 \alpha_2 \alpha_3}  x_{2 \beta_2 \beta_3} -  x_{2 \beta_2 \alpha_3}  x_{2 \alpha_2 \beta_3}: \alpha_2 < \beta_2, \alpha_3 < \beta_3\},
\end{array}
$$
and set $M=M^{\{1\}}\cup M^{\{2\}} \cup M^{\{3\}} \cup M^{\{4\}}$.
Notice that $G^{\{2\}}=M^{\{1\}} \cup M^{\{3\}} \cup M^{\{4\}}$ is the set of the $2$-minors of the second unfolding and that $G^{\{3\}}=M^{\{2\}} \cup M^{\{3\}} \cup M^{\{4\}}$ is the set of the $2$-minors of the third unfolding. 

It is well known and easy to check that $G^{\{2\}}$ and $G^{\{3\}}$ are  Gr\"obner basis of the ideals they generate with respect to $\tau$, where $\tau$ is the lexicographic order. Hence it is enough to consider the $S$-polynomials $S(f,g)$ of $f,g$ with $f \in M^{\{1\}}$ and $g \in M^{\{2\}}$ whose initial monomials  $\ini(S(f,g))$ are not relatively prime. There are two cases.

First let $f=  x_{1 \alpha_2 \alpha_3}   x_{2 \beta_2 \beta_3} -   x_{1\beta_2 \alpha_3}  x_{2 \alpha_2 \beta_3} \text{ and }
g =  x_{1 \alpha_2 \alpha_3}  x_{2 \hat{\beta}_2 \hat{\beta}_3} -  x_{1\alpha_2 \hat{\beta}_3}  x_{2 \hat{\beta}_2 \alpha_3}$,
with $ \alpha_2 < \beta_2 $ and $\alpha_3 < \hat{\beta}_3.$
Thus $ S(f,g) = -  x_{1\beta_2 \alpha_3}  x_{2 \alpha_2 \beta_3}  x_{2 \hat{\beta}_2 \hat{\beta}_3} +  x_{1 \alpha_2 \hat{\beta}_3}  x_{2 \hat{\beta}_2 \alpha_3}  x_{2 \beta_2 \beta_3},$
and since $\alpha_2< \beta_2$, one has  $\ini(S(f,g)) =  x_{1 \alpha_2 \hat{\beta}_3}  x_{2 \beta_2 \beta_3}  x_{2 \hat{\beta}_2 \alpha_3}$. 
Dividing $S(f,g)$ by
$h_0=  x_{1 \alpha_2 \hat{\beta}_3}  x_{2 \beta_2 \beta_3} -   x_{1 \beta_2 \hat{\beta}_3}  x_{2 \alpha_2 \beta_3} \in M^{\{1\}}$ one gets
$
S(f,g)  = x_{2\hat{\beta}_2 \alpha_3} \cdot h_0 -  x_{2 \alpha_2 \beta_3} ( x_{1\beta_2 \alpha_3}   x_{2 \hat{\beta}_2 \hat{\beta}_3} -  x_{1 \beta_2 \hat{\beta}_3}   x_{2 \hat{\beta}_2 \alpha_3})$
which reduces to zero modulo $M$ since 
$x_{1\beta_2 \alpha_3}   x_{2 \hat{\beta}_2 \hat{\beta}_3} -  x_{1 \beta_2 \hat{\beta}_3}   x_{2 \hat{\beta}_2 \alpha_3} \in M^{\{2\}} \, .$

The second case is $
f =  x_{1 \alpha_2 \alpha_3}   x_{2 \beta_2 \beta_3} -  x_{1\beta_2 \alpha_3}  x_{2 \alpha_2 \beta_3} \text{ and } g=  x_{1 \hat{\alpha}_2 \hat{\alpha}_3}  x_{2 \beta_2 \beta_3} -  x_{1 \hat{\alpha}_2 \beta_3}  x_{2 \beta_2 \hat{\alpha}_3},$ with $\alpha_2 < \beta_2 $ and   $ \hat{\alpha}_3 < \beta_3.$
Thus $S(f,g)  = -  x_{1 \hat{\alpha}_2 \hat{\alpha}_3}  x_{1\beta_2 \alpha_3}  x_{2 \alpha_2 \beta_3} +  x_{1 \alpha_2 \alpha_3}  x_{1 \hat{\alpha}_2 \beta_3}  x_{2 \beta_2 \hat{\alpha}_3}. 
$

If $\hat{\alpha}_2>\alpha_2$, then $\ini(S(f,g))= x_{1 \alpha_2 \alpha_3}  x_{1 \hat{\alpha}_2 \beta_3}  x_{2 \beta_2 \hat{\alpha}_3}$ and dividing by $h_1=x_{1 \alpha_2 \alpha_3}  x_{2 \beta_2 \hat{\alpha}_3} -  x_{1 \beta_2 \alpha_3}  x_{2 \alpha_2 \hat{\alpha}_3} 
\in M^{\{1\}}$  leads to
$$S(f,g)  =x_{1 \hat{\alpha}_2 \beta_3} \cdot h_1-  x_{1\beta_2 \alpha_3} (  x_{1 \hat{\alpha}_2 \hat{\alpha}_3}   x_{2 \alpha_2 \beta_3} -   x_{1 \hat{\alpha}_2 \beta_3}  x_{2 \alpha_2 \hat{\alpha}_3})$$
which reduces to zero modulo $M$ since $x_{1 \hat{\alpha}_2 \hat{\alpha}_3}   x_{2 \alpha_2 \beta_3} -   x_{1 \hat{\alpha}_2 \beta_3}  x_{2 \alpha_2 \hat{\alpha}_3}\in M^{\{2\}}$.

To conclude we have to consider three more situations, and arguing as before.

If $\hat{\alpha}_2 < \alpha_2$, then $\ini(S(f,g))= x_{1 \hat{\alpha}_2 \hat{\alpha}_3}  x_{1\beta_2 \alpha_3}  x_{2 \alpha_2 \beta_3}$ and we get 
$S(f_2,g_2)= x_{1 \hat{\alpha}_2 \beta_3} \cdot h_4-  x_{1\beta_2 \alpha_3} h_3 $
with
$ h_3 =  x_{1 \hat{\alpha}_2 \hat{\alpha}_3}  x_{2 \alpha_2 \beta_3} -  x_{1 \hat{\alpha}_2 \beta_3}  x_{2 \alpha_2 \hat{\alpha}_3} 
\in M^{\{2\}}$ and 
$h_4  =  -  x_{1\beta_2 \alpha_3}  x_{2 \alpha_2 \hat{\alpha}_3} +  x_{1 \alpha_2 \alpha_3}  x_{2 \beta_2 \hat{\alpha}_3} 
\in M^{\{1\}}\, .$

The case $\hat{\alpha}_2=\alpha_2$ and $\hat{\alpha}_3 < \alpha_3$  works as the previous one. 

If $\hat{\alpha}_2=\alpha_2$  and $\hat{\alpha}_3 =\alpha_3$, then $\ini(S(f,g))=  x_{1 \alpha_2 \alpha_3} x_{1 \alpha_2 \beta_3}   x_{2 \beta_2 \alpha_3}$ and one has
$
S(f,g) =   x_{1 \alpha_2 \alpha_3} (h_5- h_6), 
$
where
$ h_5  =  x_{1 \alpha_2 \beta_3}  x_{2 \beta_2 \alpha_3} -  x_{1 \beta_2 \beta_3}  x_{2 \alpha_2 \alpha_3} 
\in M^{\{1\}} $ and 
$h_6= x_{1\beta_2 \alpha_3}  x_{2 \alpha_2 \beta_3} -  x_{1 \beta_2 \beta_3}   x_{2 \alpha_2 \alpha_3}
 \in M^{\{2\}} \, .$

Finally, if $\alpha_2=\hat{\alpha_2}$ and $\alpha_3<\hat{\alpha}_3$, then $\ini(S(f,g))= x_{1 \alpha_2 \alpha_3}  x_{1 \alpha_2 \beta_3}  x_{2 \beta_2 \hat{\alpha}_3}$) and we have
$S(f,g)  =  x_{1 \alpha_2 \beta_3} \cdot h_7 -  x_{1 \beta_2 \alpha_3} \cdot h_8$
where
$ h_7 =  x_{1 \alpha_2 \alpha_3}  x_{2 \beta_2 \hat{\alpha}_3} -  x_{1 \beta_2 \alpha_3}  x_{2 \alpha_2 \hat{\alpha}_3} 
\in M^{\{1\}} $ and $h_8 =  x_{1 \alpha_2 \hat{\alpha}_3}  x_{2 \alpha_2 \beta_3} -  x_{1 \alpha_2 \beta_3}  x_{2 \alpha_2 \hat{\alpha}_3}
 \in M^{\{2\}}.$

Thus in these three situations  $S(f,g)$  reduces to zero modulo $M$; this finishes the proof.
\end{proof}

As we have seen in Example \ref{Ex:unfolding} instead of looking at the second and the third unfolding of the  tensor $\tensor{X}$  separately, we can combine them into a single  arrangement: 
$$
W=
\begin{matrix}
X & Y \\
Y & 
\end{matrix}
$$
where $X=(x_{1ij})$ and $Y=(x_{2ij})$ are both of dimension $a \times b$.

Thus $I(a,b)$ is generated by the $2$-minors in the arrangement and the initial ideal $\ini(I(a,b))$ is generated by the $2$-diagonals in the arrangement. We introduce a partial order in the set of variables so that the $2$-diagonals are exactly the pairs of comparable elements. To this end,  we identify the set of the variables with: 
$$P= \{(i,j) : 1\leq i \leq a, 1 \leq j \leq 2b\}$$
associating $x_{1ij}$ to  $(i,j)$ and  $x_{2ij}$ to $(i,b+j)$. 

In $P$ we introduce the following partial order: 

$(x,y)\preceq  (z,t)$ if and only if 
$\begin{cases}  
(x,y)=(z,t)  \mbox{ or} \\
x<z,\ y<t  \mbox{ or} \\ 
y\le b,\ t\ge b+1,\ y<t-b 
\end{cases}$

With this notation, the generators of $\ini(I(a,b))$ are exactly the pairs of distinct comparable elements in $P$. 
Now,  $\ini(I(a,b))$, being a square-free monomial ideal, corresponds to a simplicial complex that we denote by $\Delta_2$. Here we use the index  $2$ to recall the generators of the associated ideal are the $2$-diagonals, that is,   the pairs of comparable elements in $P$. 

Recall that an {\it antichain} in the partially ordered set $P$ is a set of elements no two of which are comparable to each other. Therefore the elements of $\Delta_2$ are the antichains of $P$. 
Moreover a {\it saturated} (or {\it maximal}) antichain of $P$ is an antichain that is, maximal with respect to inclusions.  Therefore the saturated  antichains of $P$ are the facets of $P$. 

A subset $\{(h_1,k_1), \cdots, (h_s, k_s) \}$ of elements of $P$ is said to be a {\it path} with starting point $(h_1,k_1)$ and ending point $(h_s, k_s)$  if   $(h_{t+1}, k_{t+1}) - (h_t, k_t) \in \{(1,0), (0,-1)\}$ for all $t=1,\ldots, s-1$. 
We will represent $P$ with the matrix orientation, that is,  with  $(1,1)$ in the top left corner and $(a,2b)$ in the bottom right corner. With this representation a path consists of a sequence of steps to the left  and steps  down.

\begin{thm}\label{thm:Delta2Facets}
The facets of $\Delta_2\subset P$ correspond to the paths  starting in $(1, b+h)$ and ending in $(a,h)$ for some $h$ with  $1\le h\le b$.
\end{thm}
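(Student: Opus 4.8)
The plan is to work entirely inside the poset $P$ and to organize the argument around the \emph{anti-diagonals} $\{(i,j)\in P : i-j=d\}$. Recall that the faces of $\Delta_2$ are the antichains of $P$ and the facets are the saturated antichains, so the assertion is that the maximal antichains of $P$ are exactly the down-left paths from $(1,b+h)$ to $(a,h)$, $1\le h\le b$. The first thing I would record is that any two distinct cells on a common anti-diagonal are comparable: if $i-j=i'-j'$ and $i<i'$ then $j<j'$, so $(i,j)\prec(i',j')$ by the clause $x<z,\ y<t$. Hence an antichain meets each anti-diagonal at most once, and for two of its cells $p=(i,j)$, $q=(i',j')$ with $i-j<i'-j'$ a direct check of the three defining clauses shows that incomparability is equivalent to the three conditions: (A) $i\le i'$; (B) $j\ge j'$; and (C) if $j\ge b+1$ and $j'\le b$, then $j-j'\le b$. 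Thus an antichain, read in order of increasing anti-diagonal, is a sequence with non-decreasing rows, non-increasing columns, and no ``straddling'' pair (one cell in each block) whose columns differ by more than $b$.

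With this dictionary the two easy inclusions are quick. A path from $(1,b+h)$ to $(a,h)$ raises the anti-diagonal index by exactly one at each step, has non-decreasing rows and non-increasing columns, and has total horizontal displacement exactly $b$; so any two of its cells satisfy (A) and (B), while (C) holds because the only straddling pair with column-difference $b$ is the pair of endpoints, for which it holds with equality. Hence the path is an antichain. For maximality I would use that $\Gamma$ cuts the rectangle $[a]\times[2b]$ into the cells weakly above-and-right of it and those weakly below-and-left. A cell $(i,j)$ of the first kind lies above a path cell in the preceding row via the clause $x<z,\ y<t$ (or above the endpoint $(a,h)$ via the clause $y\le b,\ t\ge b+1,\ y<t-b$ when $i=1$); dually a cell of the second kind lies below a path cell in the following row, or below the start $(1,b+h)$. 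So every cell off $\Gamma$ is comparable to a cell of $\Gamma$, and $\Gamma$ is saturated.

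The substance, and the step I expect to be the main obstacle, is the converse: a maximal antichain $A$ must be such a path. I would exploit the anti-diagonal picture together with maximality in three moves. \emph{No gaps}: if $A$ occupies anti-diagonals $d_1<d_2$ with $d_2$ the next occupied one and $d_2\ge d_1+2$, then conditions (A),(B) for the two boundary cells guarantee a row $i$ for which $(i,\,i-d_1-1)$ satisfies (A),(B) against all of $A$ (by monotonicity it suffices to check the two neighbours), and (C) is automatic since the lower-diagonal neighbour already bounds the relevant straddle width by $b$; adjoining this cell contradicts maximality, so the occupied anti-diagonals form an interval. \emph{Reaching rows $1$ and $a$}: if the extreme cells $(i_{\min},s)$ and $(i_{\max},e)$ were not in rows $1$ and $a$, one could prepend $(i_{\min}-1,s)$ or append $(i_{\max}+1,e)$ — a pure up- or down-step on a new anti-diagonal with the same column as the current extreme cell — again preserving (A),(B),(C); so $i_{\min}=1$ and $i_{\max}=a$. \emph{Width exactly $b$}: a maximal antichain meets both blocks, since to an antichain contained in one block one can always adjoin the cell $(1,b+1)$ or $(a,b)$, which is incomparable to every cell of the other block; hence the start $(1,s)$ is in the right block and the end $(a,e)$ in the left, so (C) for this pair gives $s-e\le b$ while the right-end extension of the previous move gives $s-e\ge b$. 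Setting $h=e$ then exhibits $A$ as the path from $(1,b+h)$ to $(a,h)$.

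The delicate verifications throughout are that each adjoined cell genuinely preserves condition (C): here I would always compare the new cell to the already-present extreme cell on the same side, whose column pins down the straddle width and forces the bound; the monotonicity in (A),(B) reduces the infinitely many comparisons to those with the two relevant neighbours. This is the part where the proof must be carried out with care, whereas the anti-diagonal reformulation makes the rest essentially bookkeeping.
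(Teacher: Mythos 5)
Your argument is correct, but it takes a genuinely different route from the paper's. For the direction ``paths are facets,'' the paper checks that a path is an antichain and then gets maximality from algebra: $\Delta_2$ is the initial complex of the prime ideal $I(a,b)$ and $\dim R/I(a,b)=a+b$, so no face has more than $a+b$ elements, and a path, having exactly $a+b$ cells, is automatically a facet. You instead prove maximality purely order-theoretically, by showing every cell off the path is comparable to a path cell (via the neighbouring row, or via the endpoints and the third clause), never using the identification of $\Delta_2$ with an initial complex. For the converse, the paper takes an arbitrary antichain, splits it into its left- and right-block parts, extracts the key inequality $k_\beta\le b+j_1$ from the third clause, and explicitly saturates the antichain to a path; you instead characterize maximal antichains directly through the anti-diagonal dictionary --- your equivalence of incomparability with (A), (B), (C) for cells on distinct anti-diagonals is correct, as is the fact that cells on a common anti-diagonal are comparable --- and your three extension moves (filling gaps in the occupied anti-diagonals, stretching to rows $1$ and $a$, forcing width exactly $b$) all check out, including the reduction of the (C)-verifications to the extreme cell on the matching side. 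Two points you leave implicit should be recorded to complete the write-up: (i) once the occupied anti-diagonals form an interval, conditions (A) and (B) force consecutive cells to differ by exactly one down- or left-step, which is what makes the maximal antichain literally a path; (ii) in the width move, the extension by $(1,s+1)$ can fail because $s=2b$ rather than because of (C), but then $s-e=2b-e\ge b$ holds anyway since $e\le b$, so the conclusion $s-e\ge b$ survives in both obstruction cases. The trade-off: your proof is longer at the maximality step but fully self-contained and combinatorial (it even recovers, without algebra, that all maximal antichains of $P$ have $a+b$ elements), while the paper's is shorter because it imports the dimension of the Segre variety, and its saturation argument additionally exhibits, for every face, an explicit facet containing it.
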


\begin{proof}
Let $F$ be a  path in $P$ starting in $(1, b+h)$ and ending in $(a,h)$, with $1\le h\le b$, and passing through $(k,b+1)$ and $(k,b)$. 
We first prove that $F$ is a facet. 
It is clear that no pair of points in $F$ is comparable, thus $F\in\Delta_2$.  Since we know that  $\Delta_2$ is the simplicial complex associated to $\ini(I(a,b))$ and $R/I(a,b)$ has dimension $a+b$, it follows that the facets of $\Delta_2$ have at most cardinality $a+b$.  Hence $F$ is a facet of $\Delta_2$. 

To conclude we show that every facet in $\Delta_2$ is a subset of a path  $F$ as described above. Let $E \subset P$ be in  $\Delta_2$, that is, $E$  does not contain any pair of comparable elements.
 Note that by the first condition of $\preceq$ we have that $E$ is an antichain, so it is of the form
 $$E=\{(i_1, j_1), (i_2, j_2), \ldots, (i_{\alpha}, j_{\alpha}), (h_1, k_1), (h_2, k_2), \ldots ,(h_{\beta}, k_{\beta})\},$$ with 
$i_1 \geq \ldots \geq i_{\alpha} \geq h_1 \geq \ldots \geq h_{\beta} \mbox{ and }  j_1  \leq \ldots \leq j_{\alpha}\leq b<b+1\leq k_1 \leq \ldots \leq k_{\beta}$.
Moreover we can assume that $j_{t+1}>j_t$ whenever  $i_{t+1}=j_t$, and  $k_{t+1}>k_t$ whenever  $h_{t+1}=h_t$.
We prove that $E$ is contained in a path $F$ starting in $(1,b+j_1)$ and ending in $(a,j_1)$.

If $\beta=0$, that is, $E=\{(i_1, j_1), (i_2, j_2), \ldots, (i_{\alpha}, j_{\alpha})\},$ then we can saturate the antichain, and add the points $(i_1+1,j_1),(i_1+2, j_1),\dots, (a, j_1)$ and the points $(i_{\alpha}-1,j_{\alpha}), (i_{\alpha}-2,j_{\alpha}), \dots,  (1,j_{\alpha}), (1,j_{\alpha}+1),(1,j_{\alpha}+2),\dots, (1, b+j_1)$, so that we obtain the path $F$. 
Analogously one concludes if  $\alpha=0$, that is, if $E=\{(h_1, k_1), (h_2, k_2), \ldots ,(h_{\beta}, k_{\beta})\}.$

Suppose now that both $\alpha$ and $\beta$ are not zero. Then $k_{\beta}\le b+ j_1$, otherwise it would be $j_1<k_{\beta}-b$, thus $(i_1,j_1)\prec (h_{\beta}, k_{\beta})$, which contradicts $E\in\Delta_2$. So we can saturate and add the points $(i_1+1,j_1),(i_1+2, j_1),\dots, (a, j_1)$ and the points $(h_{\beta}-1, k_{\beta}), (h_{\beta}-2, k_{\beta}), (1, k_{\beta}), (1, k_{\beta}+1), \dots, (1, b+j_1)$, so obtaining also in this case the facet $F$ containing $E$. This finishes the proof.
\end{proof}

\begin{rmk}
Let $F$ be a path in $P$, starting in $(1, b+h)$ and ending in $(a,h)$, with $1\le h\le b$, and passing through $(k,b+1)$ and $(k,b)$.
It is clear that $F$  corresponds to a path $F'$ in 
$P'= \{(i,j) : 1\leq i \leq a, 1 \leq j \leq 2b \quad \text{or} \quad  a+1 \leq i \leq 2a, 1 \leq j \leq b\}$  starting in  $(1, b+h)$ and ending in $(a+k,1)$, with $k\le a $, and  passing through $(k,b+1)$, $(k,b)$, and $(a,h)$ (see Figure \ref{Fig:Path}).
Thus one  can also rephrase the theorem in term of  paths in $P'$.  In the following we will use both the descriptions of $\Delta_2$, as subset of $P$ and of $P'$.
\end{rmk}
\begin{figure}[h]
\includegraphics[width=0.4\textwidth, trim={135 485 65 135}]{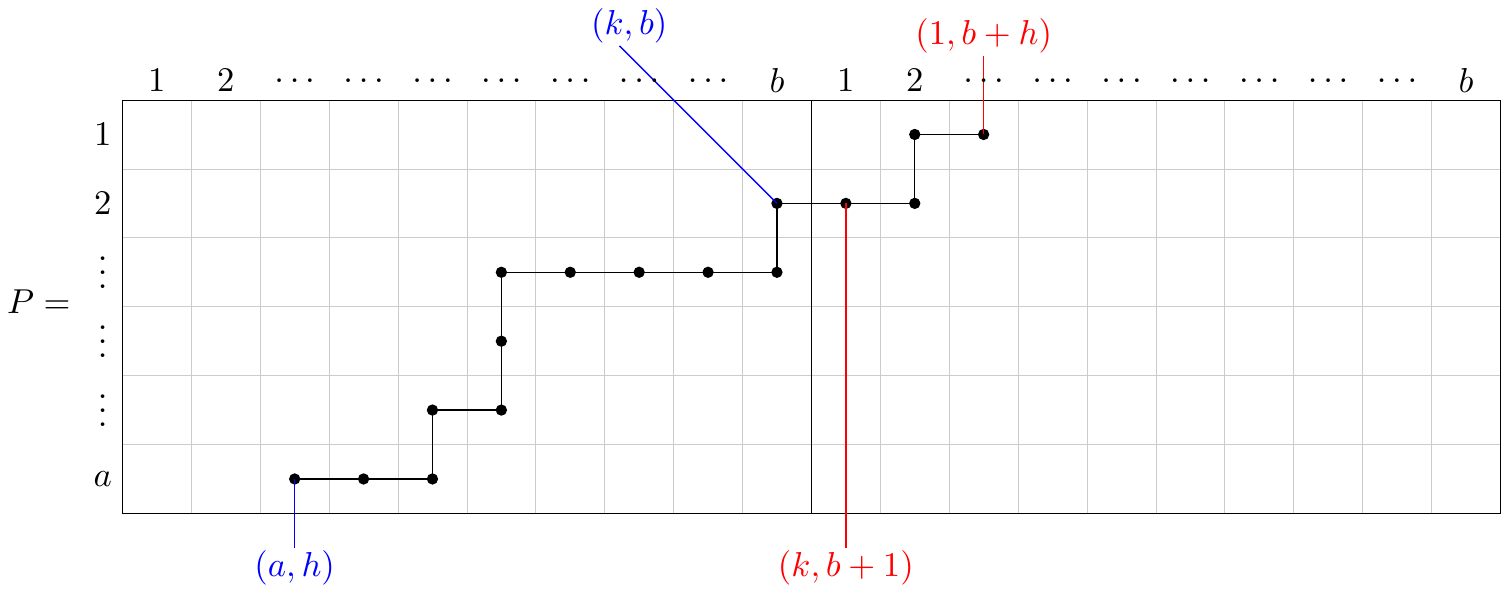}
\  \  \quad  \includegraphics[width=0.4\textwidth, trim={135 385 40 135}]{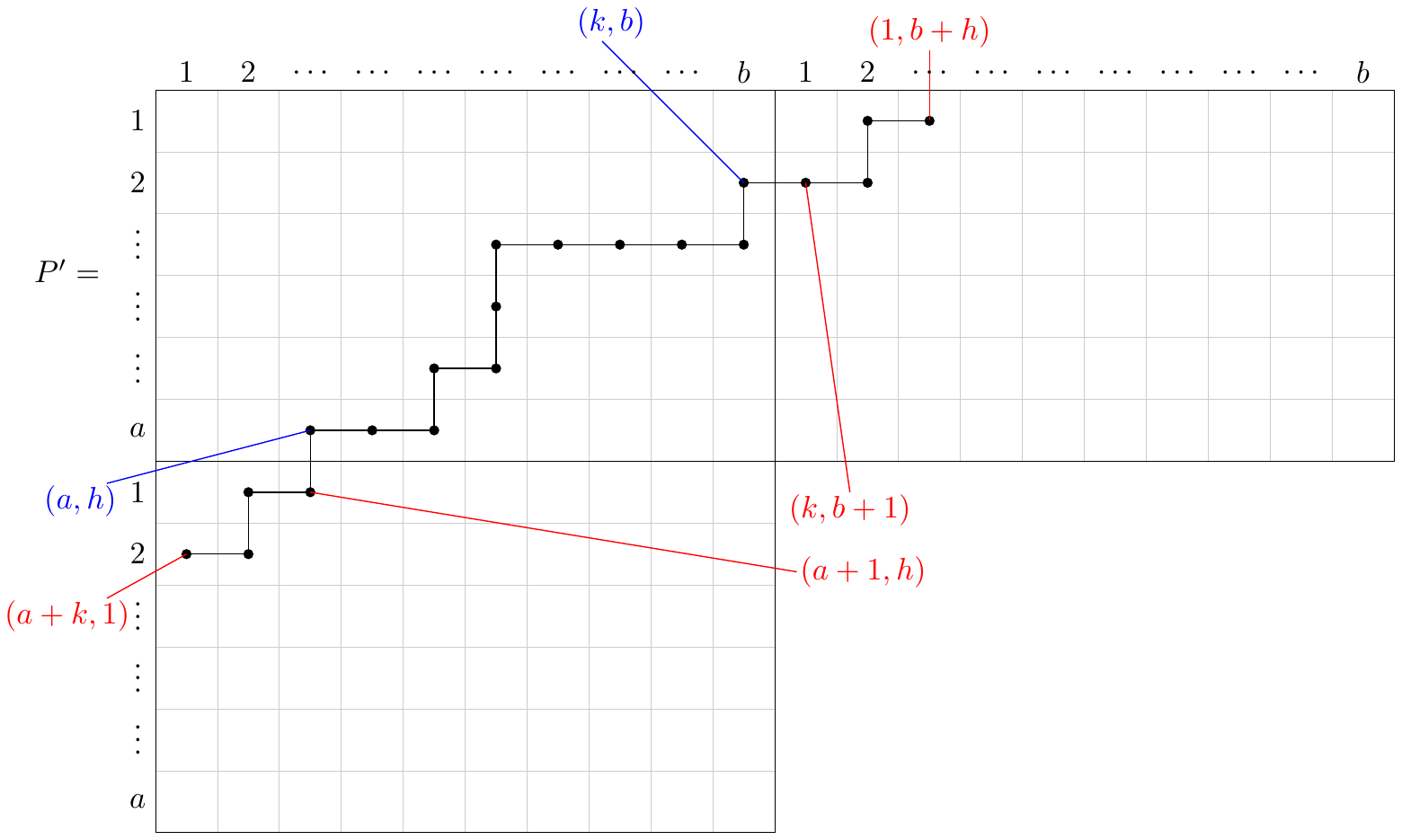}
\caption{
\label{Fig:Path} {\small An example of a path in $P$ and the corresponding path in $P'$. }
}
\end{figure}

\section{Secant ideals}

Our goal is to prove: 

\begin{thm} 
\label{Thm:main}
Let $K$ be a field of arbitrary characteristic and $a,b$ positive integers. Then 
\begin{itemize} 
\item[(1)] the defining ideal $I(a,b)^{\{t\}}$  of the $t$-secant variety $\sigma_t(2,a,b)$ of the Segre variety $\Seg(2,a,b)$ is    $$I_{t+1}(\tensor{X}^{\{2\}})+I_{t+1}(\tensor{X}^{\{3\}}).$$
\item[(2)] the $(t+1)$-minors of $\tensor{X}^{\{2\}}$ and $\tensor{X}^{\{3\}}$ are a Gr\"obner basis of  $I(a,b)^{\{t\}}$ with respect to any  diagonal term order. 
\item[(3)]  $R/I(a,b)^{\{t\}}$ is a Cohen-Macaulay domain. 
\end{itemize} 
\end{thm}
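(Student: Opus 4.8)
The plan is to exploit the fact that $\tau$ is \emph{delightful} for $I(a,b)$, so that everything can be read off from the initial complex $\Delta_2$. Write $J=I_{t+1}(\tensor X^{\{2\}})+I_{t+1}(\tensor X^{\{3\}})$ and let $L$ be the ideal generated by the leading terms (the diagonals) of the $(t+1)$-minors of the two unfoldings. The backbone of the argument is the chain
$$ L \;\subseteq\; \ini_\tau(J)\;\subseteq\;\ini_\tau\!\big(I(a,b)^{\{t\}}\big)\;\subseteq\;\ini_\tau(I(a,b))^{\{t\}}=I_{\Delta_2}^{\{t\}}=I_{\Delta_2^{\{t\}}}. $$
The first inclusion is trivial, the second follows once we know $J\subseteq I(a,b)^{\{t\}}$, the third is Theorem~\ref{thm:switchingr}, and the last two equalities use Proposition~\ref{Prop:GroebnerI2} (so $\ini_\tau(I(a,b))=I_{\Delta_2}$) together with the squarefreeness of the join of a Stanley--Reisner ideal. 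If I can show the two \emph{outer} terms agree, i.e.\ $L=I_{\Delta_2^{\{t\}}}$, the whole chain collapses: then $\ini_\tau(J)=\ini_\tau(I(a,b)^{\{t\}})$ together with $J\subseteq I(a,b)^{\{t\}}$ forces $J=I(a,b)^{\{t\}}$, which is (1), and since $L$ is generated by the leading terms of the minors this simultaneously gives that the minors are a Gr\"obner basis, which is (2); no $S$-pair computation is then required.

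For the containment $J\subseteq I(a,b)^{\{t\}}$ I would argue algebraically, so that it holds in every characteristic. In the ring defining the join one has $\tensor X^{\{2\}}=\sum_{s=1}^t \mathbf y_s^{\{2\}}$ modulo the linear relations $\sum_s y_{si}=x_i$, and each $\mathbf y_s$ satisfies $I(a,b)(\mathbf y_s)$, so there all $2$-minors of $\mathbf y_s^{\{2\}}$ and $\mathbf y_s^{\{3\}}$ vanish. The key elementary lemma is that over any commutative ring $I_{t+1}\!\big(\sum_{s=1}^tM_s\big)\subseteq\sum_{s=1}^t I_2(M_s)$: expanding a $(t+1)$-minor multilinearly in its rows, in each summand two rows come from the same $M_s$ by pigeonhole, and a generalized Laplace expansion along those rows exhibits the summand as a minor of size at least two of $M_s$ times a cofactor. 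Applying this to the two unfoldings of $\mathbf x=\sum_s\mathbf y_s$ places the generators of $J$ inside $\sum_s I(a,b)(\mathbf y_s)+(\text{relations})$; being polynomials in the $x$'s alone, they survive the elimination defining $I(a,b)^{\{t\}}$.

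The combinatorial heart is the identity $L=I_{\Delta_2^{\{t\}}}$. By Proposition~\ref{prop:Delta-r} a face of $\Delta_2^{\{t\}}$ is a union of $t$ antichains of $P$, which by Mirsky's theorem is the same as a subset of $P$ with no chain of $t+1$ elements; hence the minimal nonfaces of $\Delta_2^{\{t\}}$ are exactly the $(t+1)$-chains of $P$, and $I_{\Delta_2^{\{t\}}}$ is generated by the corresponding squarefree monomials. Every diagonal of a $(t+1)$-minor is a product of $t+1$ pairwise comparable elements, so $L\subseteq I_{\Delta_2^{\{t\}}}$; for the reverse inclusion I would show each $(t+1)$-chain is the main diagonal of a single $(t+1)$-minor of one of the two unfoldings. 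A chain of $P$ splits into an $X$-part followed by a $Y$-part (once one reaches a $Y$-point one cannot leave it), each strictly increasing in both coordinates. If the junction uses the second clause of $\preceq$ (rows increase), the whole chain is strictly increasing in rows and columns of the grid $P$, hence is a diagonal of a minor of $\tensor X^{\{2\}}=[\,X\ Y\,]$; if instead the junction uses the third clause, then reading the same chain in $\tensor X^{\{3\}}=[\,X^T\ Y^T\,]$ turns the column condition of that clause into strict increase of the row indices, while the columns increase automatically because the $X$-columns precede the $Y$-columns, so the chain is a diagonal there. Thus the two unfoldings together realize every $(t+1)$-chain, giving $L=I_{\Delta_2^{\{t\}}}$.

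It remains to prove (3). Since $\ini_\tau(I(a,b)^{\{t\}})=I_{\Delta_2^{\{t\}}}$ is squarefree, $I(a,b)^{\{t\}}$ is radical and $R/I(a,b)^{\{t\}}$ is reduced; as $\Seg(2,a,b)$, and hence $\sigma_t(2,a,b)$, is geometrically irreducible, $I(a,b)^{\{t\}}$ has a unique minimal prime and the quotient is a domain. For Cohen--Macaulayness I would transfer the property along the flat degeneration to the initial ideal: it suffices to show the Stanley--Reisner ring $K[\Delta_2^{\{t\}}]$ is Cohen--Macaulay over every field, which I would establish by exhibiting a shelling of $\Delta_2^{\{t\}}$, whose facets are the maximal unions of $t$ of the lattice paths of Theorem~\ref{thm:Delta2Facets}, ordering the $t$-tuples of paths lexicographically. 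I expect this shellability statement --- in particular verifying purity and the restriction-face condition for unions of paths --- to be the main obstacle of the whole proof; the remaining steps are essentially forced by the sandwich of initial ideals above.
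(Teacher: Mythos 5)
Your parts (1) and (2) are correct and follow exactly the paper's route: your chain of inclusions is the paper's Eq.~(\ref{eq:1}) with your $L$ playing the role of the paper's $J_t$, closed by Mirsky's theorem as in Remark~\ref{rem:Mirsky}. Two of your ingredients are actually \emph{more} explicit than the paper's, and both check out: the multilinear row-expansion lemma $I_{t+1}\bigl(\sum_s M_s\bigr)\subseteq \sum_s I_2(M_s)$ (pigeonhole on rows plus Laplace expansion) gives a purely algebraic, characteristic-free proof of the containment that the paper's Lemma~\ref{contained} justifies only by a one-line geometric remark; and your junction dichotomy for $(t+1)$-chains is right: comparability of the last $X$-point $(i_\alpha,j_\alpha)$ with the first $Y$-point $(h_1,k_1)$ forces $i_\alpha<h_1$ or $j_\alpha<k_1$, and in either case \emph{all} junction pairs are comparable via the same clause, so the chain is a diagonal of $\tensor{X}^{\{2\}}$ or of $\tensor{X}^{\{3\}}$ respectively --- a correct unpacking of what the paper compresses into the arrangement $W$ and the poset $P'$. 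Note also that the paper separately verifies via an $S$-pair computation that the $2$-minors are a Gr\"obner basis of $I(a,b)$ (Proposition~\ref{Prop:GroebnerI2}), which your sandwich needs as input; you correctly cite it rather than reprove it.

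The genuine gap is part (3), and you name it yourself: the shellability of $\Delta_{t+1}=\Delta_2^{\{t\}}$ is deferred, and that is precisely where the substance of the paper's proof lies. Two concrete items are missing. First, purity and the facet description: a maximal union of $t$ facets of $\Delta_2$ could a priori have cardinality less than $(a+b)t$ if the paths are forced to overlap; the paper rules this out in Lemma~\ref{lem:Delta2r} by combining primality of the join with Kalkbrener--Sturmfels (Theorem~\ref{thm:KalkSturm}, initial complexes of prime ideals are pure) and an explicit construction of $t$ pairwise disjoint ``right-most'' paths, and only then knows that facets are \emph{disjoint} unions of $t$ full paths --- the form on which any shelling argument must rest. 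Second, the shelling order: the paper does not use a plain lexicographic order on $t$-tuples but the dominance order $F\preceq F'$ iff $G_i'\subseteq\bar{\mathcal{R}}_{G_i}$ for all $i$, extended arbitrarily to a total order, and verifying the exchange condition in Proposition~\ref{prop:shellability} takes a delicate two-case analysis (existence or not of a low-right corner of $G_i$ in $\mathcal{R}_y$) together with an iteration whose termination must be argued. Nothing in your proposal indicates that lexicographic order satisfies the restriction-face condition, and since Cohen--Macaulayness over every field is exactly what (3) asserts, the proposal as written does not prove (3). (Your domain argument via squarefree initial ideal, radicality, and geometric irreducibility is fine and matches the paper's terse appeal to primality of joins.)
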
 

To simplify the  notation in the following we will denote by $I_{t+1}$ the ideal $I_{t+1}(\tensor{X}^{\{2\}})+I_{t+1}(\tensor{X}^{\{3\}})$. 
  
The  ideal $I(a,b)$ defines $\Seg(2,a,b)$ and is generated by the $2$-minors of the arrangement  $W$, that is,  $I_2=I(a,b)$.  Hence the secant  variety $\sigma_t(2,a,b)$ is contained in the variety of   tensors whose second and third unfolding are of rank at most $t$.  Hence it follows that: 

\begin{lemma}\label{contained} The ideal $I_{t+1} $ is contained in $I_2^{\{t\}}$.
\end{lemma}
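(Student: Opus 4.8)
The plan is to establish the inclusion $I_{t+1} \subseteq I_2^{\{t\}}$ by a geometric/ideal-theoretic argument that leverages the well-known behavior of secant varieties under unfoldings. The key observation is that $I_2 = I(a,b)$ defines the Segre variety $\Seg(2,a,b)$, whose points are rank-one tensors, and that for a rank-one tensor every unfolding is a rank-one matrix. More generally, a point on the $t$-th secant variety is (a limit of) a sum of $t$ rank-one tensors, so each of its unfoldings is (a limit of) a sum of $t$ rank-one matrices, hence has rank at most $t$. This is precisely the statement that all $(t+1)$-minors of $\tensor{X}^{\{2\}}$ and $\tensor{X}^{\{3\}}$ vanish on $\sigma_t(2,a,b)$, which gives the containment at the level of varieties; I must then promote this to a containment of ideals.

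Concretely, I would work directly with the join/elimination definition of $I_2^{\{t\}} = I_2 * I_2 * \cdots * I_2$. First I would recall that $I_2^{\{t\}}$ is, by definition, the intersection with $R$ of the ideal
$$ J = \Big( \sum_{s=1}^t I_2(\mathbf{y}_s) + (y_{1i}+\cdots+y_{ti}-x_i : i) \Big) $$
in the larger polynomial ring $K[\mathbf{x},\mathbf{y}]$. The natural strategy is to show that each $(t+1)$-minor of $\tensor{X}^{\{2\}}$ (and symmetrically of $\tensor{X}^{\{3\}}$) lies in $J$, and hence in the elimination ideal $J \cap R = I_2^{\{t\}}$. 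Writing each entry $x_i$ as $\sum_s y_{si}$, the unfolding matrix $\tensor{X}^{\{2\}}$ becomes the sum $\sum_{s=1}^t Y_s^{\{2\}}$ of the corresponding unfoldings of the $t$ auxiliary tensors modulo the linear relations; since each $Y_s^{\{2\}}$ is the unfolding of a tensor whose $2$-minors are set to zero in $J$, its rank is effectively one, so the sum has rank at most $t$, forcing every $(t+1)$-minor to vanish modulo $J$. Making this rank bookkeeping precise is the technical heart of the argument: one expands a $(t+1)$-minor of $\tensor{X}^{\{2\}} = \sum_s Y_s^{\{2\}}$ by multilinearity of the determinant in its rows (or columns), obtaining a sum of minors each of which repeats at least two rows drawn from the same $Y_s^{\{2\}}$ (by pigeonhole, since there are $t+1$ rows and only $t$ summands), and each such repeated-row minor is a $2\times 2$-minor multiple that lies in $I_2(Y_s^{\{2\}}) \subseteq J$.

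The main obstacle I anticipate is the combinatorial expansion in the previous step: the Cauchy--Binet-type or multilinearity expansion of a determinant of a sum of $t$ matrices produces many terms, and I need a clean argument that \emph{every} term lands in $\sum_s I_2(\mathbf{y}_s)$. The pigeonhole principle supplies the qualitative reason (any choice of $t+1$ rows from $t$ blocks must reuse a block, and two rows from the same rank-forced block generate a $2$-minor), but turning this into a valid membership statement requires care in tracking that the relevant $2\times 2$ subdeterminant actually appears as a factor, and that the linear substitution $x_i = \sum_s y_{si}$ does not interfere. Once this local computation is in hand, the elimination step is automatic: membership in $J$ together with the fact that the $(t+1)$-minors involve only the $x$-variables yields membership in $J \cap R = I_2^{\{t\}}$, completing the proof that $I_{t+1} \subseteq I_2^{\{t\}}$.
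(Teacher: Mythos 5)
Your proposal is correct, but it takes a genuinely different route from the paper. The paper disposes of this lemma in essentially one line of geometry: since $I_2=I(a,b)$ cuts out $\Seg(2,a,b)$, the secant variety $\sigma_t(2,a,b)$ lies inside the locus of tensors whose second and third unfoldings have rank at most $t$, so the $(t+1)$-minors vanish there and hence lie in the (prime) join ideal $I_2^{\{t\}}$; no computation in the auxiliary ring is performed. You instead prove the ideal membership directly from the elimination definition of the join: substitute $x_i\equiv\sum_s y_{si}$, write a $(t+1)$-minor of $\tensor{X}^{\{2\}}$ as a minor of $\sum_s Y_s^{\{2\}}$ modulo the linear forms, expand by multilinearity of the determinant in the rows, invoke pigeonhole to find two rows from the same block $Y_s^{\{2\}}$, and finish by Laplace expansion along those two rows, which exhibits each offending term as a combination of $2$-minors of $Y_s^{\{2\}}$ (which lie in $I_2(\mathbf{y}_s)$ because $I_2=I_2(\tensor{X}^{\{2\}})+I_2(\tensor{X}^{\{3\}})$) times complementary minors; then $J\cap R=I_2^{\{t\}}$ gives the claim. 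This is in effect the Sturmfels--Sullivant flattening argument, and it is airtight: the two-row Laplace expansion is exactly the tool that makes your ``repeated rows give a $2$-minor factor'' step precise, and multilinearity plus pigeonhole work over any commutative ring. What each approach buys: the paper's argument is shorter but leans on the geometric interpretation of the join (stated in the paper only for $K$ algebraically closed), leaving a small descent-to-arbitrary-fields point implicit; your purely ideal-theoretic argument needs no algebraic closure and no Zariski-closure description at all, which fits the paper's characteristic-free ambitions particularly well, at the cost of the combinatorial bookkeeping you correctly identified as the technical heart.
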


\begin{rmk}\label{rem:Mirsky}  Mirsky's theorem \cite{M} states that the size of the largest chain in a finite poset   equals the smallest number of antichains into which the  poset  may be partitioned. The simplical complex $\Delta_2^{\{t\}}$
consists of the subsets of $P$ that can be  decomposed into the union of $t$ antichains.  According to Mirsky's theorem, they are exactly the subsets of $P$ that do not contains chains of $t+1$ elements.  
Equivalently the ideal of $\Delta_2^{\{t\}}$ is generated by the chains of  $t+1$ elements, that is,  the leading terms of the $(t+1)$-minors of $W$. 
\end{rmk}

\begin{proof}[Proof of Theorem \ref{Thm:main} part (1) and (2)]
Denote by $J_{t} $ the ideal generated by the initial terms of the $(t+1)$-minors of the arrangement $W$, that is, the monomials correponding to the $t+1$-diagonals of $P'$. 
Notice that we have the following relations

\begin{equation} \label{eq:1}
J_t \subseteq \ini(I_{t+1})  \underset{\ref{contained}} \subseteq \ini(I_2^{\{t\}})  \underset{\ref{thm:switchingr}} \subseteq \ini(I_2)^{\{t\}}  
 = I_{\Delta_2}^{\{t\}} = I_{\Delta_2^{\{t\}}}  = J_t.   
\end{equation}

where the last equality follows from Remark~\ref{rem:Mirsky}. 
This proves that $J_t = \ini(I_{t+1})$ and $I(a,b)^{\{t\}}=I_{t+1}$. This concludes the proof of \ref{Thm:main}  part (1) and (2). 
\end{proof}

Note that all the inclusions in   Eq.(\ref{eq:1}) are indeed equalities. 
Thus one has:
\begin{cor}\label{delightful} 
The diagonal term orders are delightful for the ideal $I_2$.  
\end{cor}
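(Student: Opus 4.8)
The plan is to read off Corollary~\ref{delightful} directly from the chain of inclusions recorded in Eq.~(\ref{eq:1}). Recall that, by definition, a term order is delightful for $I_2$ precisely when $\ini(I_2^{\{r\}}) = \ini(I_2)^{\{r\}}$ holds for every $r$; thus the task reduces to extracting exactly this equality from what has already been established in the proof of Theorem~\ref{Thm:main}.

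First I would observe that Eq.~(\ref{eq:1}) is valid not merely for one value but for every positive integer $t$: the containment $\ini(I_{t+1})\subseteq\ini(I_2^{\{t\}})$ comes from Lemma~\ref{contained}, the containment $\ini(I_2^{\{t\}})\subseteq\ini(I_2)^{\{t\}}$ is Theorem~\ref{thm:switchingr}, and the identification $\ini(I_2)^{\{t\}}=I_{\Delta_2}^{\{t\}}=I_{\Delta_2^{\{t\}}}=J_t$ rests on Remark~\ref{rem:Mirsky} via Mirsky's theorem, all of which hold for arbitrary $t$. Since the leftmost and rightmost ideals in the chain are both equal to $J_t$, the entire chain collapses and every inclusion is forced to be an equality. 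In particular, the a priori strict inclusion furnished by Theorem~\ref{thm:switchingr} becomes $\ini(I_2^{\{t\}})=\ini(I_2)^{\{t\}}$, and since this holds for all $t$ (equivalently, for all $r$), the diagonal term order $\tau$ meets the defining condition and is therefore delightful for $I_2$.

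There is, honestly, no remaining obstacle at this stage: all of the genuine work was already carried out in the proof of Theorem~\ref{Thm:main}, namely the nontrivial containment $J_t\subseteq\ini(I_{t+1})$ together with the combinatorial identification of $I_{\Delta_2^{\{t\}}}$ with $J_t$ coming from Mirsky's theorem. The content of the corollary is simply the formal remark that, because the two endpoints of Eq.~(\ref{eq:1}) coincide, the generally strict inclusion of Theorem~\ref{thm:switchingr} is an equality in the present case; so the proof amounts to invoking Eq.~(\ref{eq:1}) and reading off the middle equality.
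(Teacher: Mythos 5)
Your proposal is correct and follows the paper's own route exactly: the paper deduces the corollary from the observation that the chain in Eq.~(\ref{eq:1}), valid for every $t$, begins and ends with $J_t$, so all intermediate inclusions---in particular $\ini(I_2^{\{t\}})\subseteq \ini(I_2)^{\{t\}}$ from Theorem~\ref{thm:switchingr}---collapse to equalities, which is precisely the definition of a delightful term order.
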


To prove \ref{Thm:main}  part (3) we need to describe better the facets of the simplicial complex 
$\Delta_{t+1}$ associated to $\ini(I(a,b)^{\{t\}})$.  As a special case of  \ref{Thm:main}  part (2) we have that if  $t\geq \min(2a,b)$ the ideal $I(a,b)^{\{t\}}$ is trivial and if $a\leq t<\min(2a,b)$ then $I(a,b)^{\{t\}}$ is a generic determinantal ideal and hence well understood. So in the remaining part of the paper  we will assume that $t<a$. 

We recall this well-known result.
 
 \begin{thm} \cite[Theorem 1]{KS}\label{thm:KalkSturm}
Let $P$ be a prime ideal in a polynomial ring $R$,
 and let $\prec$ be any term order on $R$. Then the simplicial complex associated to $\ini_{\prec}(P)$ is pure of dimension $\dim(R/P)-1$ and strongly connected.
 \end{thm}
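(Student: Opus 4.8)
The plan is to deduce both assertions from the standard Gr\"obner degeneration of $P$ to $\ini_\prec(P)$, the point being that passing to the initial ideal is a flat specialization of an irreducible scheme, and that both purity (equidimensionality) and strong connectedness (connectedness in codimension one) survive such a specialization. First I would reduce from the term order $\prec$ to a weight: since $P$ has a finite Gr\"obner basis with respect to $\prec$, there is a nonnegative integer weight vector $w$ with $\ini_w(P)=\ini_\prec(P)$, so it suffices to treat $\ini_w(P)$. I then homogenize $P$ with respect to $w$ to obtain an ideal $\tilde P\subseteq R[t]$ and set $A=R[t]/\tilde P$. The classical facts here are that $A$ is flat over $K[t]$ with general fiber $R/P$ and special fiber $A/tA=R/\ini_w(P)$, and that $\tilde P$ is again prime; hence $A$ is a domain of dimension $d+1$, where $d=\dim(R/P)$, and $t$ is a nonzerodivisor on it.

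For \emph{purity} I would argue directly on $A$. Since $A$ is a finitely generated domain over a field it is catenary and equidimensional, and $t$ is a nonzero nonunit, so by Krull's principal ideal theorem every minimal prime of $(t)$ in $A$ has height one, hence coheight $d$. As the minimal primes of $(t)$ in $A$ are exactly the minimal primes of $A/tA=R/\ini_w(P)$, this shows that $\ini_w(P)$ is equidimensional of dimension $d$; equivalently, the simplicial complex attached to $\sqrt{\ini_w(P)}$ is pure of dimension $d-1=\dim(R/P)-1$. This part is routine.

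The substantial point is \emph{strong connectedness}, which I would phrase as connectedness in codimension one of $\operatorname{Spec}(A/tA)$ and obtain from a connectedness theorem for hyperplane sections. After arranging the grading suitably and localizing (or completing) at the irrelevant graded maximal ideal, the total space $\operatorname{Spec}(A)$ is the spectrum of a local domain, hence irreducible and a fortiori connected in codimension one; cutting by the single element $t$ and invoking Grothendieck's connectedness theorem then yields that the special fiber $\operatorname{Spec}(A/tA)$ remains connected in codimension one, provided its dimension is at least one. Translating back, the facet--ridge graph of the pure complex attached to $\sqrt{\ini_w(P)}$ is connected, which is precisely strong connectedness. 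I expect this last step to be the main obstacle: one must set up the grading so that the connectedness theorem applies, verify its dimension hypotheses (a local domain need not be $S_2$, so the version used must concern the principal hyperplane section of an irreducible scheme), and dispose of the low-dimensional base cases $d\le 1$ by hand, where the complex is a point, a simplex, or a connected graph directly. The reduction from $\prec$ to $w$ and the degeneration bookkeeping, while standard, also have to be arranged compatibly with this grading.
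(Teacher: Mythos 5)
The paper does not prove this statement at all: it is quoted verbatim as a known result from \cite{KS}, so there is no internal proof to compare against, and your argument has to be judged on its own. Judged that way, it is correct in outline, and it is a genuinely different route from the original one of Kalkbrener and Sturmfels, whose proof in \cite{KS} is a direct Gr\"obner-theoretic argument; what you sketch is instead the flat-degeneration proof later worked out by Varbaro, which in fact yields a stronger statement (connectedness in codimension one passes from $R/I$ to $R/\ini(I)$ for \emph{any} ideal $I$ with that property, primality being used only to make the total space irreducible). Your purity half is indeed routine: $A=R[t]/\tilde P$ is an affine domain, hence catenary with $\dim A/\mathfrak q=\dim A-\operatorname{ht}\mathfrak q$, so Krull's principal ideal theorem applied to $t$ gives that every minimal prime of $\ini_w(P)$ has coheight $d$, i.e.\ every facet of the initial complex has cardinality $d$.

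For the connectedness half, three points you flag only loosely must be nailed down. First, choose $w$ \emph{strictly} positive (the Gr\"obner cone of $\prec$ on $P$ contains positive integer vectors), so that $A$ is positively graded with $\deg t=1$ and has an irrelevant maximal ideal $\mathfrak m$. Second, Grothendieck's connectedness theorem, in the form $c(B/fB)\geq \min\{c(B),\dim B-1\}-1$, is a statement about \emph{complete} local rings: localizing at $\mathfrak m$ is not enough, you must complete, and then you need $\widehat{A_{\mathfrak m}}$ to remain irreducible. This is exactly where the grading earns its keep: the completion of a local domain need not be a domain (a nodal curve is the standard example), but here the $\mathfrak m$-adic filtration is equivalent to the filtration by degree, so the completion is the degreewise product and a lowest-degree-term argument shows it is again a domain; hence $c(\widehat{A_{\mathfrak m}})=\dim \widehat{A_{\mathfrak m}}=d+1$ and the theorem gives $c(\widehat{A_{\mathfrak m}}/t\widehat{A_{\mathfrak m}})\geq d-1$, i.e.\ connectedness in codimension one of the special fiber of the completion. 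Third, you must descend this from the completion back to $R/\ini_w(P)$: by faithful flatness and going-down every minimal prime upstairs lies over a minimal prime downstairs, and $\dim$ of sums of (extended) minimal primes is preserved, so a disconnection in codimension one downstairs would lift upstairs. With these three repairs, together with your reduction from $\prec$ to $w$ and the trivial cases $d\leq 1$, the proposal becomes a complete and correct proof, at the cost of invoking heavier machinery than \cite{KS} itself uses.
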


Denote by ${\mathcal F}(\Delta)$ the set of the facets of a simplicial complex $\Delta$.

\begin{lemma}\label{lem:Delta2r}
The ideal $I_{t+1}$ is prime. Moreover the simplicial complex  $\Delta_{t+1}$ is pure of dimension $\dim\Delta_{t+1}=(a+b)t-1$, with $t\leq a-1$, and 
\begin{equation}
{\mathcal F}(\Delta_{t+1}) = \{F_1 \cup F_2  \cup  \,\ldots\, \cup F_{t}:  F_i \in{\mathcal F}(\Delta_2) \mbox{ and } F_i\cap F_j=\emptyset  \mbox{ for } i\neq j\}.\end{equation}
 
\end{lemma}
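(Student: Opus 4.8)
The plan is to exploit the identification $\Delta_{t+1}=\Delta_2^{\{t\}}$, which is already recorded inside the proof of Theorem~\ref{Thm:main}: the chain of equalities \eqref{eq:1} shows $\ini(I_{t+1})=I_{\Delta_2^{\{t\}}}$, so the Stanley--Reisner complex $\Delta_{t+1}$ of $\ini(I_{t+1})$ is exactly $\Delta_2^{\{t\}}$. I would first dispose of primeness. Since $\ini(I_{t+1})=I_{\Delta_{t+1}}$ is a squarefree monomial ideal it is radical, and the standard fact that a radical initial ideal forces the ideal itself to be radical (via $\ini(I_{t+1})\subseteq\ini(\sqrt{I_{t+1}})\subseteq\sqrt{\ini(I_{t+1})}=\ini(I_{t+1})$) gives that $I_{t+1}$ is radical; this argument is characteristic free and survives base change to $\overline{K}$, because the Gr\"obner basis of the $(t+1)$-minors (Theorem~\ref{Thm:main}(2)) is preserved and the initial ideal stays the same squarefree ideal, so $I_{t+1}\otimes_K\overline K$ is again radical. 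By Theorem~\ref{Thm:main}(1) one has $I_{t+1}=I(a,b)^{\{t\}}$, and over $\overline K$ its zero locus is the secant variety $\sigma_t(2,a,b)$, which is irreducible as the closure of the image of an irreducible variety under the join map. A radical ideal cutting out an irreducible variety is prime, hence $I_{t+1}\otimes_K\overline K$ is prime, and therefore $I_{t+1}$ is prime as well (a subring of a domain is a domain).

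For the dimension I would bound the cardinality of the faces of $\Delta_{t+1}=\Delta_2^{\{t\}}$ from both sides. By Remark~\ref{rem:Mirsky} every face is a union of $t$ antichains of $P$, and by Theorem~\ref{thm:Delta2Facets} every antichain of $P$ has at most $a+b$ elements (the maximal ones are the paths, all of cardinality $a+b$); hence every face has at most $t(a+b)$ elements. For the reverse inequality I would exhibit $t$ pairwise disjoint facets of $\Delta_2$: for $s=1,\dots,t$ let $Q_s$ be the path that descends in column $b+s$ from $(1,b+s)$ to $(s,b+s)$, then moves left along row $s$ to $(s,s)$, and finally descends in column $s$ to $(a,s)$. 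Since $t\le a\le b$ all the rows and columns involved exist, each $Q_s$ is a path from $(1,b+s)$ to $(a,s)$ and so a facet of $\Delta_2$ by Theorem~\ref{thm:Delta2Facets}, and a short case analysis on the three segments of $Q_s$ and $Q_{s'}$ (for $s<s'$) shows $Q_s\cap Q_{s'}=\emptyset$. Their union is a face of $\Delta_{t+1}$ with exactly $t(a+b)$ elements, so $\dim\Delta_{t+1}=t(a+b)-1$ and, by Gr\"obner degeneration, $\dim(R/I_{t+1})=t(a+b)$.

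Finally I would read off purity and the facet description. Having shown that $I_{t+1}$ is prime, Theorem~\ref{thm:KalkSturm} applies and yields that $\Delta_{t+1}$ is pure (and strongly connected) of dimension $\dim(R/I_{t+1})-1=t(a+b)-1$; in particular every facet has exactly $t(a+b)$ elements. By Proposition~\ref{prop:Delta-r} each facet of $\Delta_2^{\{t\}}$ is a union $F_1\cup\dots\cup F_t$ of $t$ distinct facets of $\Delta_2$, each of cardinality $a+b$; since $|F_1\cup\dots\cup F_t|\le t(a+b)$ with equality precisely when the $F_i$ are pairwise disjoint, purity forces disjointness. Conversely, any disjoint union of $t$ facets of $\Delta_2$ has the maximal cardinality $t(a+b)$ and is therefore a facet of $\Delta_{t+1}$. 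This is exactly the asserted description of $\mathcal F(\Delta_{t+1})$.

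The hard part is the primeness statement, and specifically making the irreducibility argument characteristic free: one must be sure that the join construction commutes with the extension $K\subseteq\overline K$, so that the zero locus of $I_{t+1}$ over $\overline K$ really is $\sigma_t(2,a,b)$, and that this secant variety is irreducible in every characteristic; the radicality coming from the squarefree initial ideal is what then upgrades irreducibility to primeness. The remaining steps---the antichain bound, the explicit disjoint paths $Q_s$, and the passage from purity to the facet description---are elementary once primeness and Theorem~\ref{thm:KalkSturm} are available. If one prefers to avoid invoking primeness for purity, an alternative is to prove directly that any union of $t$ facets of $\Delta_2$ which is not a disjoint union can be enlarged inside $\Delta_2^{\{t\}}$, which establishes purity combinatorially.
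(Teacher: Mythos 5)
Your proposal is correct, and from purity onward it coincides with the paper's own proof: both apply Kalkbrener--Sturmfels (Theorem \ref{thm:KalkSturm}) to the initial complex of the prime ideal $I_{t+1}$, bound the cardinality of faces of $\Delta_2^{\{t\}}$ by $t(a+b)$ using that every face is a union of $t$ antichains and every maximal antichain is a path with $a+b$ elements (Theorem \ref{thm:Delta2Facets}), exhibit $t$ pairwise disjoint facets of $\Delta_2$, and then let purity plus the cardinality count force the stated description of ${\mathcal F}(\Delta_{t+1})$; your hook-shaped paths $Q_s$ (down column $b+s$, across row $s$, down column $s$) are simply a different explicit choice from the paper's recursively defined ``right most'' paths (Figure \ref{Fig:rpaths}), and your disjointness check is sound. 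Where you genuinely diverge is primeness. The paper disposes of it in one sentence --- $I(a,b)$ is prime, hence so is its secant ideal $I_{t+1}=I(a,b)^{\{t\}}$ --- which, over a field that need not be algebraically closed, silently rests on the fact that the join of copies of the toric prime $I(a,b)$ is again prime (the $t$-fold tensor power of the semigroup ring $R/I(a,b)$ over $K$ is again an affine semigroup ring, hence a domain, and the join is the kernel of a map into it). You instead deduce radicality of $I_{t+1}$ from its squarefree initial ideal via Theorem \ref{Thm:main}(2), base change to $\overline{K}$, identify the zero locus with the irreducible secant variety $\sigma_t(2,a,b)$ using Theorem \ref{Thm:main}(1) together with the background fact from \cite{SS} that over an algebraically closed field the join ideal is the defining ideal of the secant variety, and then descend primeness by faithful flatness of $K\subseteq\overline{K}$. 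Your route is longer but makes explicit precisely what the paper leaves implicit --- why primeness is characteristic free and survives non-closed $K$ --- and you correctly flag the two facts that need verification (joins and Gr\"obner bases commute with field extension; irreducibility of the secant variety in every characteristic). Both arguments are valid, and the combinatorial core of the lemma is handled identically.
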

\begin{proof}
The ideal $I(a,b)$ is prime  and so is its secant ideal $I_{t+1}$.  Thus  $\Delta_{t+1}$ is pure, since by Corollary \ref{delightful}  it is equal to   $\Delta_2^{\{t\}}$, which is pure by Theorem \ref{thm:KalkSturm}. Since any face of  $\Delta_{t+1}$  is the union of $t$ faces of $\Delta_2$  and the facets $\Delta_2$ have $a+b$ elements, to conclude it is enough to exhibit $t$ facets, say  $F_1,\dots, F_t$,  of $\Delta_2$ that are pairwise disjoint.  
 Let $F_1,\ldots, F_{t} \in \mathcal{F}(\Delta_2)$ be given as follows. 
Firstly $F_1$ is the ``right most" path in $P$ from $(1,2b)$ to $(a,b)$, that is, 
$$\{
(1,2b),  \ldots,   (a-1,2b),  (a,2b),  \ldots, (a,b+2)   (a,b+1),     (a,b)  \}.$$
Secondly   $F_2$ is the ``right most" path in $P$ from $(1,2b-1)$ to $(a,b-1)$ that does not intersect $F_1$ and   for $i=3,\dots, t$,  $F_i$ is the ``right most" path in $P$ from $(1,2b-i+1)$ to $(a,b-i+1)$ to  that does not intersect $F_{i-1}$.  For example  the picture \ref{Fig:rpaths} shows this construction in the extremal case  $t=a=6$, $b=10$. 
 
 \begin{figure}
 \includegraphics[width=0.6\textwidth, trim={135 495 65 135}]{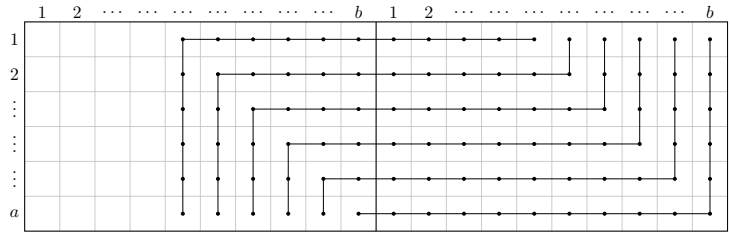}
\caption{
 \label{Fig:rpaths}
A facet  in $\Delta_7$ with $a=6$, $b=10$ }
\end{figure}
\end{proof}

In the following we say that an element  $(x,y)$ of a path $G$ in $P$ is a   low-right corner of   $G$  if  also $(x-1,y)$ and $(x,y-1)$ are in $G$. 
 By using the description of the facets of $\Delta_{t+1}$ given above  we prove the following. 

\begin{prop}\label{prop:shellability}
The simplicial complex $\Delta_{t+1}$  is shellable.
\end{prop}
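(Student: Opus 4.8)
The plan is to exhibit an explicit \emph{lexicographic} shelling of the facets of $\Delta_{t+1}$, using the description from Lemma~\ref{lem:Delta2r} that every facet is a disjoint union $G = F_1 \cup \cdots \cup F_t$ of $t$ facets of $\Delta_2$, i.e.\ of $t$ pairwise non-crossing lattice paths in $P$ (equivalently in $P'$), each running from the top row down-and-left to the bottom row. Since $\Delta_{t+1}$ is pure of dimension $(a+b)t - 1$ by Lemma~\ref{lem:Delta2r}, it suffices to order its facets $G_1, \ldots, G_m$ so that for every $k$ and every $i < k$ there is a $j < k$ with $G_i \cap G_k \subseteq G_j \cap G_k$ and $|G_k \setminus G_j| = 1$.

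First I would fix a total order on the vertex set $P$ for which an up-left move decreases a vertex — for instance declaring $(x,y) \prec_V (x',y')$ when $(x,y)$ precedes $(x',y')$ in the lexicographic order of the pairs, so that $(x-1,y-1) \prec_V (x,y)$ — and order the facets by the induced lexicographic order on their vertex sets (comparing the $\prec_V$-largest vertex at which two facets differ). The mechanism driving the shelling is the \emph{corner flip}: if $(x,y)$ is a low-right corner of one of the constituent paths of $G$ (so that both $(x-1,y)$ and $(x,y-1)$ lie on that path), then rerouting the path through $(x-1,y-1)$ in place of $(x,y)$ removes the single vertex $(x,y)$ and inserts the single vertex $(x-1,y-1)$. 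Each such flip yields another facet $G'$ of $\Delta_{t+1}$ with $G' \prec G$ and $G \cap G' = G \setminus \{(x,y)\} = G' \setminus \{(x-1,y-1)\}$, a common face of codimension one.

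With this in hand I would carry out the shelling verification as follows. The lexicographically smallest facet is the configuration in which all $t$ paths are pushed as far up and to the left as the disjointness constraint allows, and it has no flippable corner. For any other facet $G$ and any earlier facet $H$, I would locate a low-right corner $(x,y)$ of $G$ that does \emph{not} belong to $H$; flipping it produces an earlier facet $G'$ with $G \cap G' = G \setminus \{(x,y)\} \supseteq G \cap H$, which is exactly the required condition. Taking $(x,y)$ to be, say, the $\prec_V$-largest corner at which $G$ disagrees with the minimal facet should guarantee both that it is a genuine low-right corner and that the flip lowers the lexicographic rank.

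I expect the main obstacle to be showing that the corner flip stays inside $\Delta_{t+1}$, that is, that after rerouting one path through $(x-1,y-1)$ the family of $t$ paths remains pairwise disjoint, and that a usable low-right corner outside $H$ can always be found. The delicate points are the interaction between adjacent paths — flipping a corner of one path may collide with its neighbour, so one must argue that the outermost disagreeing corner is always free — and the behaviour at the seam $y = b,\, b+1$ where a path crosses from the $Y$-block to the $X$-block of the arrangement $W$. Handling these two cases via the explicit coordinates of the paths supplied by Theorem~\ref{thm:Delta2Facets} and the remark following it is where the real work lies; the lexicographic bookkeeping is then routine.
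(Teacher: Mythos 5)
Your strategy founders on a concrete structural fact: a corner flip never moves the endpoints of a path. If $(x,y)$ is a low-right corner of a constituent path $F_i$, then $(x-1,y)$ and $(x,y-1)$ remain on the path after the flip, and neither endpoint can itself be a low-right corner (the start $(1,b+h_i)$ has no vertex above it, and the end $(a,h_i)$ has no path vertex to its left). Hence every flip preserves the index tuple $(h_1,\dots,h_t)$ recording where the $t$ paths start and end, so corner flips only connect facets \emph{within} one index class, while a shelling must relate facets across classes. Worse, some facets admit no flip at all: already for $t=1$ the hook path $(1,b+h),(1,b+h-1),\dots,(1,h),(2,h),\dots,(a,h)$ has no low-right corner, yet there is one such hook for every $h\in\{1,\dots,b\}$, so at most one of them can be the initial facet of your order and the remaining hooks have no flip-neighbour with which to verify the shelling condition against earlier facets. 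Your claim that for any later facet $G$ and any earlier facet $H$ one can ``locate a low-right corner of $G$ not in $H$'' is therefore false, and the failure is not at the seam $y=b,b+1$ that you flagged but in the basic inventory of codimension-one moves: only corner flips are not enough.

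The paper's proof supplies exactly the missing move and a matching order. Besides the corner flip $F''=(F\cup\{x^L\})\setminus\{x\}$ with $x^L=(u-1,v-1)$ (its Case 2), it uses an endpoint shift $F''=(F\cup\{(a,h_i-1)\})\setminus\{(1,b+h_i)\}$ (its Case 1), which deletes the starting point of the $i$-th path and appends a new ending point, turning a path of index $h_i$ into one of index $h_i-1$; this is what lets one descend between index classes and handles precisely the hook-type facets above. Moreover, the paper does not order facets lexicographically by vertex sets: it refines the partial order $F\preceq F'$ iff $G_i'\subseteq\bar{\mathcal{R}}_{G_i}$ for all $i$ to a total order, and the vertex $x$ to remove is chosen by comparing $F$ with the \emph{given} earlier facet $F'$, with a finite cascade across the paths when the natural candidate happens to lie on the next path of $F'$ — not by comparing with a single global minimum, which cannot certify the condition for arbitrary earlier $H$. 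To salvage your variant you would at minimum have to add the endpoint-shift move and redo the verification relative to each earlier facet; as written the proposal cannot be completed.
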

\begin{proof} 
Given $x=(u,v)\in P$, we set 
$\mathcal{R}_x=\{(i,j)\in P \,:\, i>u, j>v\}$, 
$\bar {\mathcal{R}}_x=\{(i,j)\in P \,:\,  i\geq u, j\geq v\}$, 
$\mathcal{L}_x=\{(i,j)\in P \,:\,  i<u, j<v\}$, and
 $\bar{\mathcal{L}}_x=\{(i,j)\in P \,:\,  i\leq u, j\leq v\}$. Furthermore, if $G\subseteq  P$, we let $\mathcal{R}_G=\cup_{x\in G}\mathcal{R}_x$, $\bar{\mathcal{R}}_G=\cup_{x\in G}\bar{\mathcal{R}}_x$, $\mathcal{L}_G=\cup_{x\in G}\mathcal{L}_x$ and $\bar{\mathcal{L}}_G=\cup_{x\in G}\mathcal{L}_x$. Finally, we let $x^L=(u-1,v-1)$. 

First we define a partial order on the set of the facets of $\Delta_{t+1}$. Let $F=G_1\cup\ldots\cup G_{t}$ and $F'=G'_1\cup\ldots\cup G'_{t}$ be two facets of $\Delta_{t+1}$, where $G_i$ is a path starting in  $(1, b+h_i)$ and ending in $(a,h_i)$ and $G'_i$ is a path starting in  $(1, b+k_i)$ and ending in $(a,k_i)$. Assume that $h_1 < \cdots < h_{t} $ and  $k_1 < \cdots < k_{t} $. Set 
$$F \preceq F'  \hbox{\;\;\; if and only if \;\;\;} G'_i \subseteq \bar{\mathcal{R}}_{G_i} \hbox{\;\;\; for all \;\;\;} i=1,\ldots,t.$$
Extend $\preceq$ to a total order $\le$ on the set of the facets of $\Delta_{t+1}$. To
 prove that $\Delta_{t+1}$ is shellable, we need to show that, given any two facets $F'< F$, there exists
$x\in F\setminus F'$ and a  facet $F''$ such that $F''< F$ and $F\setminus F''=\{x\}$. 

Let $F' < F$ be two given facets decomposed as above. 
Since $F\not\preceq F'$,  we may consider the least 
integer $i$ such that $G'_i\not\subseteq\bar{\mathcal{R}}_{G_i}$, with $i\leq t$.
Thus there exists $y\in G'_i$ such that $y\not\in \bar{\mathcal{R}}_{G_i}$, that is, $y\in {\mathcal{L}}_{G_i}$. Choose such a $y$ with the smallest row index possible, and note that $\mathcal{R}_y\cap G_i\not=\emptyset$.

Now we consider two cases, according to the existence of a low-right corner $x$ of  $G_i$ which is in $\mathcal{R}_y$, cf. Figure \ref{Fig:shell}.

{\bf Case 1}: Assume that  such a corner does not exist. Let $y=(r,s)$; note that since $y\in \mathcal{L}_{G_i}$, then there exist $x'\in G_i$ such that $y\in \mathcal{L}_{x'}$, and by assumption  $x'=(r',k_i)$. In particular $k_i\le s<h_i$, thus $k_i<h_i$.
Note that $(1,b+h_i-1)$ is in $G_i$, because otherwise  we would have $(2,b+h_i)\in G_i$ and  thus  $y=(1, b+k_i)$, and there would be 
a low-right corner  of $G_i$ belonging to  $\mathcal{R}_y$, a contradiction.  Moreover  $(1,b+h_i)$ is not in $G_i'$. 

Now if  $i=t$ or if $i<t$ and $(1,b+h_i)\not\in G_{i+1}'$, then $(1,b+h_i)\in F\setminus F'$ and  $F''=(F\cup\{(a,h_i-1)\})\setminus\{(1,b+h_i)\}$ is facet such that $F''\preceq F$, thus $F'' < F$, as desired.  Otherwise, if $i<t$ and $(1,b+h_i)\in G_{i+1}'$, then $h_i\leq k_{i+1}$, and there exists an element  $y'\in G'_{i+1}$ such that $y'\not\in \bar{\mathcal{R}}_{G_{i+1}}$. So we can start again arguing on $y'$ as we did on $y$,  by using Case 1 or Case 2.

{\bf Case 2}: suppose that such a corner exists.
Since $y\in G'_i$ and the path of $F'$ are disjoint, one has that $y\in{\mathcal{R}}_{G'_{i-1}}$. By the minimality of $i$, $G'_{i-1}\subseteq \bar{\mathcal{R}}_{G_{i-1}}$, thus $y\in{\mathcal{R}}_{G_{i-1}}$.  Since $y\in \mathcal{L}_x$, it follows that $x^L\in\mathcal{R}_{G_{i-1}}$ and, consequently,
$x^L\not\in G_{i-1}$, thus  $x^L\not\in F$. 

Now if $x\not\in F'$, then  $F''=(F\cup\{x^L\})\setminus\{x\}$ is a facet of $\Delta_t$ such  that $F''\preceq F$, thus $F'' < F$, as desired. 

It remains to consider the case in which  $x\in F'$, that is,  $i<t$ and $x\in G_{i+1}'$. Since  $x \not\in \bar{\mathcal{R}}_{G_{i+1}}$, we can start again applying to $x$ the arguments we have used for $y$, following Case 1 or Case 2.

It is clear that this procedure concludes after a finite number of steps, proving that $\Delta_{t+1}$  is shellable.


\end{proof}

\begin{figure}[h!]
 \includegraphics[width=7cm]{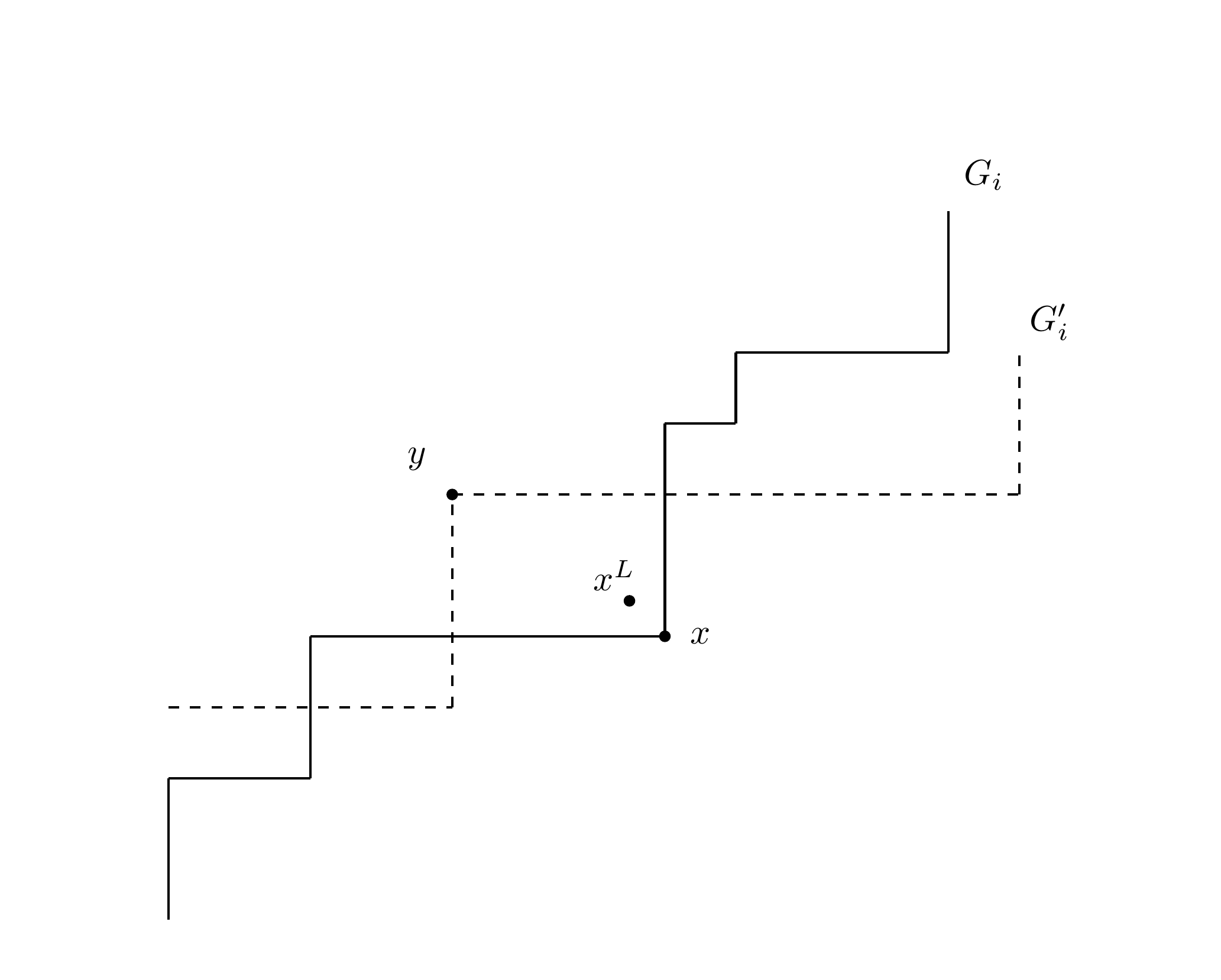}
\caption{
 \label{Fig:shell}
 }
\end{figure}

As a consequence we can conclude the proof of  Theorem \ref{Thm:main}. 

\begin{proof}[ Proof of \ref{Thm:main}  part (3)]  Since $\Delta_{t+1}$ is shellable the associate Stanley-Reisner ring $R/I_{\Delta_{t+1}}$ is Cohen-Macaulay \cite[Theorem 5.1.13]{BH}. Since $\ini(I(a,b)^{\{t\}})=I_{\Delta_{t+1}}$ it follows that $R/I(a,b)^{\{t\}}$ is Cohen-Macaulay as well. 
\end{proof} 
 
\begin{thm} 
\label{Thm:main2}
With the notation of \ref{Thm:main}, the ring  $R/I(a,b)^{\{t\}}$ is a domain of dimension $(a+b)t$ (with $t\leq a-1$), and of multiplicity
$$\sum_{1\leq h_1 < \cdots < h_{t} \leq b} \det (g_{ij}  )$$
with $g_{ij}= \binom{a+b-h_j+h_i-1}{a-1}$ for $1\leq i,j\leq t$.  Furthermore the Castelnuovo Mumford regularity of $R/I(a,b)^{\{t\}}$ is  less than or equal to $at$, with equality if $b\geq 2t$. 
\end{thm}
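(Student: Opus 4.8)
The plan is to deduce everything from the Gr\"obner degeneration of $I(a,b)^{\{t\}}=I_{t+1}$ to the shellable complex $\Delta_{t+1}$. The domain property is immediate, since $I_{t+1}$ is prime by Lemma~\ref{lem:Delta2r}. The dimension is $(a+b)t$ because a Gr\"obner degeneration preserves the Hilbert function, so $\dim R/I(a,b)^{\{t\}}=\dim R/I_{\Delta_{t+1}}=\dim\Delta_{t+1}+1=(a+b)t$ by Lemma~\ref{lem:Delta2r}. For the multiplicity I would again use that the Hilbert series is unchanged under passing to the initial ideal, so that $e(R/I(a,b)^{\{t\}})=e(R/I_{\Delta_{t+1}})$; and since $\Delta_{t+1}$ is pure of dimension $(a+b)t-1$, its multiplicity equals the number of its facets (writing the Hilbert series in $f$-vector form and computing $\lim_{z\to1}(1-z)^{(a+b)t}\,\mathrm{HS}(z)$, only the top $f$-number survives).

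Thus the multiplicity is the number of facets of $\Delta_{t+1}$, and by Lemma~\ref{lem:Delta2r} and Theorem~\ref{thm:Delta2Facets} these are exactly the families $G_1\cup\cdots\cup G_t$ of $t$ pairwise vertex-disjoint monotone (down/left) lattice paths, where $G_i$ runs from $A_i=(1,b+h_i)$ to $B_i=(a,h_i)$ for some $1\le h_1<\cdots<h_t\le b$. I would count these by the Lindstr\"om--Gessel--Viennot lemma. The number of monotone paths from $A_i$ to $B_j$ is $\binom{(a-1)+(b+h_i-h_j)}{a-1}=g_{ij}$; moreover, since the sources $A_i$ and the sinks $B_j$ are simultaneously ordered from left to right by increasing $h$, a standard planarity argument shows that any vertex-disjoint family must match $A_i$ to $B_i$, so the identity is the only permutation that contributes. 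Hence for fixed $h_1<\cdots<h_t$ the number of vertex-disjoint families is $\det(g_{ij})_{1\le i,j\le t}$, and summing over all $1\le h_1<\cdots<h_t\le b$ yields the stated multiplicity.

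For the regularity the key observation is that $R/I(a,b)^{\{t\}}$ and $R/I_{\Delta_{t+1}}$ are both Cohen--Macaulay of the same dimension --- the former by Theorem~\ref{Thm:main}(3), the latter because $\Delta_{t+1}$ is shellable (Proposition~\ref{prop:shellability}) --- and they share the same Hilbert series. For a Cohen--Macaulay graded quotient the regularity equals the degree of the $h$-polynomial (indeed the regularity equals the $a$-invariant plus the dimension, and the $a$-invariant is read off the Hilbert series), so $\reg R/I(a,b)^{\{t\}}=\reg R/I_{\Delta_{t+1}}=\deg h_{\Delta_{t+1}}(z)$. Using the shelling of Proposition~\ref{prop:shellability}, $h_i=\#\{F:|\mathcal{R}(F)|=i\}$, where $\mathcal{R}(F)$ is the restriction of the facet $F$, so $\deg h=\max_F|\mathcal{R}(F)|$.

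It remains to compute this maximum. The removable vertices making up $\mathcal{R}(F)$ are the ``down-then-left'' corners of the constituent paths, together with at most one removable top endpoint $(1,b+h_i)$ per path. A monotone path from $(1,b+h_i)$ to $(a,h_i)$ has at most $a-1$ down-then-left corners (there are $a-1$ down steps and $b\ge a-1$ left steps), so $|\mathcal{R}(F)|\le t(a-1)+t=at$ for every facet, giving $\reg R/I(a,b)^{\{t\}}\le at$ unconditionally. When $b\ge 2t$ there is enough horizontal room to place $t$ pairwise disjoint zig-zag paths, each realizing $a-1$ corners and a removable endpoint; this facet has $|\mathcal{R}(F)|=at$, whence $h_{at}\ne 0$ and $\reg=at$. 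I expect the main obstacle to be the precise bookkeeping of the restriction sets: verifying that the only order-decreasing elementary moves are the corner moves $x\mapsto x^{L}$ and the endpoint moves, that the replacement vertex always lies outside $F$, and that these contribute at most $a$ per path --- and then pinning down the extremal disjoint configuration precisely when $b\ge 2t$.
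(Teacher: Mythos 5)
Your proposal is correct and follows essentially the same route as the paper: the multiplicity is the facet count of the pure complex $\Delta_{t+1}$, evaluated via the Gessel--Viennot determinant with entries $\binom{a+b-h_j+h_i-1}{a-1}$ summed over $1\le h_1<\cdots<h_t\le b$, and the regularity bound comes from the shelling of Proposition~\ref{prop:shellability} by bounding each path's restriction set by its at most $a-1$ low-right corners plus its starting point, with the same extremal configuration realizing $|r(G)|=at$ when $b\ge 2t$. The only difference is cosmetic: you make explicit why multiplicity and regularity transfer across the Gr\"obner degeneration (equal Hilbert series together with Cohen--Macaulayness on both sides), steps the paper states more tersely.
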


\begin{proof} Since $\Delta_{t+1}$ is pure, the multiplicity of $R/I(a,b)^{\{t\}}$  is equal to the number of the facets of $\Delta_{t+1}$. 
The set of the facets of  $\Delta_{t+1}$ can be written as the disjoint union of the set of non-intersecting paths  with starting points $Q_1=(1,b+h_1),\dots, Q_t=(1,b+h_t)$ and ending points $P_1=(a,h_1), \dots, P_t=(a,h_t)$ where $1\leq h_1 < \cdots < h_{t} \leq b$. 
 By a well known formula of  Gessel-Viennot \cite{GV}, the number of non-intersecting  paths  with starting points $Q_1,\dots, Q_t$ and ending points $P_1, \dots, P_t$  is the determinant of the matrix  whose $(i,j)$-th entry   is the number of  paths from $Q_i$ to $P_j$ which is easily seen to be equal to 
$$ \binom{a+b-h_j+h_i-1}{a-1}.$$
This gives the formula for the multiplicity. 
 
Given a shellable simplical complex $\Delta$ with a shelling $G_1,\dots, G_v$ we set 
$$r(G_i)= \{ x\in G_i : \mbox{ there exists } j<i \mbox{ such that } G_i\setminus G_j=\{x\} \}.$$
 If $K[\Delta]$ is the Stanley-Reisner ring associated to $\Delta$, then one has 
  $$\reg(K[\Delta])=\max\{ |r(G_i)| : i=1,\dots, v\}.$$
 Applying this to $\Delta_{t+1}$ and using the fact that $R/I(a,b)^{\{t\}}$ and $K[\Delta_{t+1}]$ have the same regularity, we have 
  $$\reg(R/I(a,b))= \max\{ |r(G)| :  G \mbox{ is a facet of } \Delta_{t+1} \}.$$
For a facet $G=\cup_{i=1}^t F_i$   and with respect to the shelling described in Proposition \ref{prop:shellability} one has that 
 $$r(G)\subseteq  \cup_{i=1}^t \{ x\in F_i : x \mbox{ is a right turn of $F_i$ or } x \mbox{ is the starting point of }F_i\}.$$
 Since any path $F_i$ has at most $a-1$ such turns, we have that $|r(G)|\leq at$ and hence $\reg(R/I(a,b)^{\{t\}})\leq at$. If $b\geq 2t$ then  one can actually find a facet $G$ of  $\Delta_{t+1}$ such that 
 $|r(G)|=at$ proving that for   $b\geq 2t$ one has $\reg(R/I(a,b)^{\{t\}})= at$.
 The construction  of  the facet $G$ such that $|r(G)|=at$ is illustrated by the following example, where $(a,b)=(5,8)$,  $t=4$, and a facet $G$ is represented with a dot in every  point belonging to $r(G)$: 

\noindent\mbox{\kern4.5em}\hfill
\includegraphics[width=0.6\textwidth, trim=135 565 135 125]{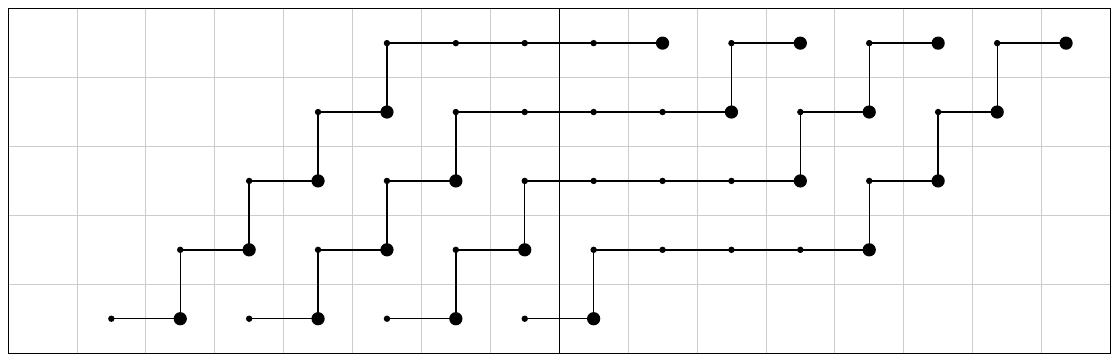}
\hfill\mbox{}\end{proof}
In the case $a=b=2t$ the facet with $ |r(G)|=at$ described in the proof is indeed the only facet of $\Delta_{t+1}$ with that property, as in this example, with $(a,b)=(6,6)$, $t=3$:    
 
\noindent\mbox{}\hfill
\includegraphics[width=0.6\textwidth, trim={165 545 165 125}]{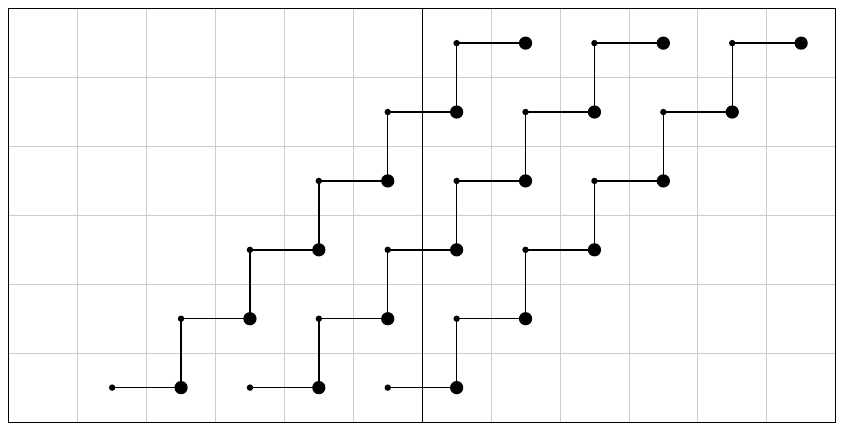}
\hfill\mbox{}

  This implies that the highest non-zero entry   of the $h$-vector of $\Delta_{t+1}$  is  $1$. The latter is a necessary condition for the $R/I(a,b)^{\{t\}}$ to be Gorenstein. This leads to the following conjecture: 
 
 \begin{conj}
 For any $t$ the ring  $R/I(2t,2t)^{\{t\}}$ is Gorenstein.
\end{conj}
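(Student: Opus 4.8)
The plan is to reduce the Gorenstein property of $R/I(2t,2t)^{\{t\}}$ to a combinatorial statement about the complex $\Delta_{t+1}$ and then to verify that statement. First I would invoke the standard fact that graded Betti numbers can only increase under passage to an initial ideal, so that, writing $I=I(2t,2t)^{\{t\}}$ and $\ini(I)=I_{\Delta_{t+1}}$, one has $\beta_{ij}(R/I)\le\beta_{ij}(R/\ini(I))$ for all $i,j$. Since $R/I$ is Cohen--Macaulay by Theorem~\ref{Thm:main}(3), its Cohen--Macaulay type equals $\sum_j\beta_{c,j}(R/I)$, where $c$ is the codimension; hence $\mathrm{type}(R/I)\le\mathrm{type}(K[\Delta_{t+1}])$. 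As the type of any nonzero Cohen--Macaulay ring is at least $1$, it suffices to prove that $K[\Delta_{t+1}]$ is Gorenstein, that is, $\mathrm{type}(K[\Delta_{t+1}])=1$, in the balanced case $a=b=2t$. This is the crucial reduction: it turns an assertion about the determinantal ring into a question about the Stanley--Reisner ring of a shellable complex.

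Next I would prove that $\Delta_{t+1}$ is a Gorenstein complex over $K$ when $a=b=2t$. By Stanley's criterion (see \cite{BH}), a Cohen--Macaulay complex is Gorenstein over $K$ precisely when its core is a $K$-homology sphere. Since $\Delta_{t+1}$ is shellable by Proposition~\ref{prop:shellability}, the shelling restricts to a shelling of the core, and a shellable pseudomanifold without boundary is a shellable sphere; thus it is enough to check that the core of $\Delta_{t+1}$ is a pseudomanifold without boundary, i.e. that every codimension-one face of the core lies in exactly two facets. I would first identify the core by locating the cone points, namely the vertices of $P$ contained in every facet: using the facet description of Lemma~\ref{lem:Delta2r} together with the fact that, for $a=b=2t$, the $t$ disjoint staircase paths are forced to pack the region $P$ tightly, one expects the extreme corner vertices (the forced starting and ending points of the outermost paths) to be cone points, and removing them to leave the core.

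The heart of the argument is the pseudomanifold computation, and this is where I expect the main difficulty. A ridge of $\Delta_{t+1}$ is a facet $F=G_1\cup\cdots\cup G_t$ with one vertex $x$ deleted, and it lies in exactly two facets iff there are exactly two ways to re-complete it to a union of $t$ disjoint full paths. In the path model the two completions at an \emph{interior} corner $x=(u,v)$ of some $G_i$ are exactly $x$ itself and its opposite corner $x^L=(u-1,v-1)$; this is precisely the local exchange used in the shelling move $F''=(F\cup\{x^L\})\setminus\{x\}$ in the proof of Proposition~\ref{prop:shellability}. The condition $a=b=2t$ is what must be used to guarantee rigidity: it should force every ridge of the core to admit exactly two completions and rule out ``boundary'' ridges that admit only one, since the tight packing leaves no room for a path to move except by the corner exchange. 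I would make this precise by classifying, for a vertex $x$ removed from a path, the monotone re-routings compatible with disjointness from the neighbouring paths, and showing that in the balanced case the only freedom is the corner swap.

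As an independent consistency check I would compute the $h$-vector of $\Delta_{t+1}$ from the Gessel--Viennot facet count of Theorem~\ref{Thm:main2} together with the restriction sets $r(G)$ of the shelling, verifying that it is symmetric with top entry $1$. The observation preceding the conjecture, that for $a=b=2t$ there is a unique facet with $|r(G)|=at$, already establishes that the top entry is $1$, and symmetry of the full $h$-vector is the remaining numerical hallmark of the Gorenstein property; a failure of symmetry would flag an error in the packing analysis, while confirming it would corroborate the pseudomanifold claim that is the genuine obstacle.
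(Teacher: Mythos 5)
First, note that the statement you are trying to prove is stated in the paper as an open \emph{conjecture}: the authors give no general proof, only a verification for $t\le 3$, done by computing the numerator of the Hilbert series of the Cohen--Macaulay \emph{domain} $R/I(2t,2t)^{\{t\}}$ and checking that it is symmetric, which suffices by \cite[Corollary 4.4.6]{BH}. So your proposal should be judged as an attempt at an open problem, and unfortunately it contains a fatal gap at the very first step. Your ``crucial reduction'' asks to prove that $K[\Delta_{t+1}]$ is Gorenstein when $a=b=2t$; this is a strictly stronger statement than the conjecture (Betti numbers can jump when passing to the initial ideal, and Gorensteinness is not preserved), and it is in fact \emph{false} already for $t=1$, $a=b=2$. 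There $I(2,2)^{\{1\}}=I(2,2)$ is the Hibi ideal of the Segre embedding of $\PP^1\times\PP^1\times\PP^1$, and $R/I(2,2)$ is Gorenstein (CM domain with symmetric $h$-polynomial $1+4x+x^2$). But $\Delta_2$ is a $3$-dimensional complex on $8$ vertices with $6$ facets and no cone points, so its core is $\Delta_2$ itself; and it has boundary ridges: for instance the ridge $\{(1,2),(1,1),(2,1)\}$ lies in exactly \emph{one} facet, since the only monotone path from $(1,3)$ to $(2,1)$ through these three points is the one running along row $1$ (the corner-exchange move is unavailable at the endpoint of a path with $h=1$, precisely the ``boundary'' phenomenon you hoped the balanced case would exclude). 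Equivalently, a Gorenstein* $3$-complex would need $h_4=1$, whereas the $h$-polynomial of $K[\Delta_2]$ has degree $2$. So the core is not a homology sphere, $K[\Delta_2]$ is not Gorenstein, and your pseudomanifold program cannot succeed: the packing rigidity you invoke simply does not hold at the extreme paths, for any $t$.

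Ironically, the only salvageable part of your proposal is what you relegate to a ``consistency check.'' Since $R/I(2t,2t)^{\{t\}}$ is a Cohen--Macaulay \emph{domain} by Theorem \ref{Thm:main}(3), Stanley's criterion \cite[Corollary 4.4.6]{BH} says it is Gorenstein \emph{if and only if} its $h$-vector is symmetric; symmetry here is not merely a hallmark but is equivalent to the conclusion. This is exactly the paper's route for $t\le 3$, and the genuine open problem is to establish that symmetry for all $t$ --- for example combinatorially, via the shelling restriction sets $r(G)$ or the Gessel--Viennot facet count of Theorem \ref{Thm:main2} (say, by an involution on facets exchanging $|r(G)|$ with $at-|r(G)|$). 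The observation in the paper that the top $h$-entry equals $1$ when $a=b=2t$ is the first nontrivial instance of this symmetry, not, as your plan requires, evidence that $\Delta_{t+1}$ is a sphere.
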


For $t\leq 3$ one can actually compute the numerator of the Hilbert series of $R/I(2t,2t)^{\{t\}}$ and check that it is symmetric and hence verify, in view of \cite[Corollary 4.4.6]{BH}, that the conjecture holds.   For example for $t=3$ the numerator of the Hilbert series of $R/I(6,6)^{\{3\}}$ turns out to be: 
\begin{align*}
1 + 36x + 666x^2 + 8436x^3 + 68526x^4 + 366660x^5 + 1330644x^6 + 3296124x^7 + 5650866x^8\\   + 6762316x^9 + 5650866x^{10} + 3296124x^{11} + 1330644x^{12} + 366660x^{13} + 68526x^{14} + 8436x^{15} + \\666x^{16} + 36x^{17} + x^{18} 
\end{align*}

\section{Acknowledgements}
The work of \v{Z}eljka Stojanac  has been supported by the Excellence Initiative of the German Federal
and State Governments (Grant 81),  the DFG projects GRO 4334/1,2 (SPP1798 CoSIP) and  INdAM support during her one-month visit to the Department of Mathematics of the University of Genova in the spring of 2017.


\
 
 \bigskip
 
  \bigskip

 \

\end{document}